\newtheorem{theorem}{Theorem}
\newtheorem{lemma}[theorem]{Lemma}
\newtheorem{algorithm}[theorem]{Algorithm}
\newtheorem{remark}[theorem]{Remark}
\newtheorem{proposition}[theorem]{Proposition}
\newtheorem{definition}[theorem]{Definition}
\newcommand{\rd}{\,\mathrm{d}}
\newcommand{\bsa}{\boldsymbol{a}}
\newcommand{\bsb}{\boldsymbol{b}}
\newcommand{\bsk}{\boldsymbol{k}}
\newcommand{\bsl}{\boldsymbol{l}}
\newcommand{\bsq}{\boldsymbol{q}}
\newcommand{\bsw}{\boldsymbol{w}}
\newcommand{\bsx}{\boldsymbol{x}}
\newcommand{\bsy}{\boldsymbol{y}}
\newcommand{\bsz}{\boldsymbol{z}}
\newcommand{\bsP}{\boldsymbol{P}}
\newcommand{\bsQ}{\boldsymbol{Q}}
\newcommand{\bsR}{\boldsymbol{R}}
\newcommand{\bsU}{\boldsymbol{U}}
\newcommand{\bsV}{\boldsymbol{V}}
\newcommand{\bsalpha}{\boldsymbol{\alpha}}
\newcommand{\bsnu}{\boldsymbol{\nu}}
\newcommand{\bstau}{\boldsymbol{\tau}}
\newcommand{\bszero}{\boldsymbol{0}}
\newcommand{\nat}{\mathbb{N}}
\newcommand{\RR}{\mathbb{R}}
\newcommand{\ZZ}{\mathbb{Z}}
\newcommand{\Bcal}{\mathcal{B}}
\newcommand{\Ecal}{\mathcal{E}}
\newcommand{\Hcal}{\mathcal{H}}
\newcommand{\Kcal}{\mathcal{K}}
\newcommand{\Pcal}{\mathcal{P}}
\newcommand{\tr}{\mathrm{tr}}
\newcommand{\wal}{\mathrm{wal}}
\newcommand{\Zbinfty}{G}
\begin{document}

\title{Digital nets with infinite digit expansions and construction of folded digital nets for quasi-Monte Carlo integration\thanks{The works of the second and third authors were supported by the Program for Leading Graduate Schools, MEXT, Japan.}}

\author{Takashi Goda\thanks{Graduate School of Engineering, The University of Tokyo, 7-3-1 Hongo, Bunkyo-ku, Tokyo 113-8656 ({\tt goda@frcer.t.u-tokyo.ac.jp}).}, Kosuke Suzuki\thanks{Graduate School of Mathematical Sciences, The University of Tokyo, 3-8-1 Komaba, Meguro-ku, Tokyo 153-8914 ({\tt ksuzuki@ms.u-tokyo.ac.jp}), JSPS research fellow.}, Takehito Yoshiki\thanks{Graduate School of Mathematical Sciences, The University of Tokyo, 3-8-1 Komaba, Meguro-ku, Tokyo 153-8914 ({\tt yosiki@ms.u-tokyo.ac.jp}).}}

\date{\today}

\maketitle

\begin{abstract}
In this paper we study quasi-Monte Carlo integration of smooth functions using digital nets. We fold digital nets over $\ZZ_b$ by means of the $b$-adic tent transformation, which has recently been introduced by the authors, and employ such \emph{folded digital nets} as quadrature points. We first analyze the worst-case error of quasi-Monte Carlo rules using folded digital nets in reproducing kernel Hilbert spaces. Here we need to permit digital nets with ``infinite digit expansions'', which are beyond the scope of the classical definition of digital nets. We overcome this issue by considering the infinite product of cyclic groups and the characters on it. We then give an explicit means of constructing good folded digital nets as follows: we use higher order polynomial lattice point sets for digital nets and show that the component-by-component construction can find good \emph{folded higher order polynomial lattice rules} that achieve the optimal convergence rate of the worst-case error in certain Sobolev spaces of smoothness of arbitrarily high order.
\end{abstract}
\emph{Keywords}:\; Quasi-Monte Carlo, numerical integration, tent transformation, folded digital nets, higher order polynomial lattice rules\\
\emph{MSC classifications}:\; 65C05, 65D30, 65D32

\section{Introduction}\label{sec:intro}
Quasi-Monte Carlo (QMC) integration of a real-valued function $f$ defined over the $s$-dimensional unit cube is given by
  \begin{align*}
     Q(f;P) := \frac{1}{|P|}\sum_{\bsx\in P}f(\bsx) ,
  \end{align*}
where $P\subset [0,1]^s$ is a point set and $|P|$ denotes the cardinality of $P$, to approximate the integral 
  \begin{align*}
     I(f) := \int_{[0,1]^s}f(\bsx)\rd \bsx ,
  \end{align*}
as accurately as possible. As one of the main families of QMC point sets, digital nets and sequences have been extensively studied in the literature, see for instance \cite{DP10,N92b}. We shall discuss the definition of digital nets in Subsection \ref{subsec:digital_net}. There have been many good explicit constructions of digital nets and sequences, including those proposed by Sobol', Faure, Niederreiter, Niederreiter and Xing as well as others, see \cite[Section~8]{DP10} for more information. These point sets generally hold good properties of uniform distribution modulo one. The typical convergence rate of the QMC integration error $|I(f)-Q(f;P)|$ using these point sets is $O(|P|^{-1+\varepsilon})$ with arbitrarily small $\varepsilon >0$.

Our goal of this paper is to give an explicit means of constructing good deterministic point sets for QMC integration of smooth functions in certain Sobolev spaces $\Hcal_{\alpha}$ of smoothness of arbitrarily high order $\alpha\ge 2$. For such smooth functions, it is possible to achieve higher order convergence of $O(|P|^{-\alpha+\varepsilon})$ (with arbitrarily small $\varepsilon >0$) of the QMC integration error by using \emph{higher order} digital nets \cite{BD09,D08}. The explicit construction of higher order digital nets introduced in \cite{D08} uses digital nets whose number of components is a multiple of the dimension, and interlaces them digitally in a certain way. Another construction of higher order digital nets, known under the name of \emph{higher order polynomial lattice point sets} (HOPLPSs), is introduced in \cite{DP07} by generalizing the definition of polynomial lattice point sets, which was originally given in \cite{N92a}. Recently, one of the authors has utilized original polynomial lattice point sets as interlaced components in the former construction principle, and has proved that it is possible to obtain good \emph{interlaced polynomial lattice point sets} (IPLPSs) for higher order digital nets \cite{Gxx1}, see also \cite{GDxx}.

The most important advantage of IPLPSs over HOPLPSs lies in the construction cost. Fast component-by-component (CBC) construction requires $O(s\alpha |P|\log |P|)$ arithmetic operations using $O(|P|)$ memory for IPLPSs \cite{Gxx1}, whereas requiring $O(s\alpha |P|^{\alpha}\log |P|)$ arithmetic operations using $O(|P|^{\alpha})$ memory for HOPLPSs \cite{BDLNP12}. In order to reduce the construction cost for HOPLPSs, one of the authors considered applying a random digital shift and then folding the resulting point sets by using the tent transformation in \cite{Gxx2}. (We note that the tent transformation was originally used for lattice rules in \cite{H02}.) The obtained cost for the fast CBC construction becomes $O(s\alpha |P|^{\alpha/2}\log |P|)$ arithmetic operations using $O(|P|^{\alpha/2})$ memory. This is a generalization of the study in \cite{CDLP07}. However, this result not only restricts the base $b$ to $2$, but also needs a randomization by a random digital shift. Thus, we cannot construct good \emph{deterministic} point sets in this way in contrast to \cite{BDLNP12,Gxx1}.

Regarding the restriction of the base, the authors have recently introduced the \emph{$b$-adic tent transformation} ($b$-TT) in \cite{GSYxx} for any positive integer $b\ge 2$, by generalizing the original (dyadic) tent transformation, and studied the mean square worst-case error of \emph{digitally shifted and then folded} digital nets in reproducing kernel Hilbert spaces. As a continuation of the study in \cite{GSYxx}, we resolve the above concerns on \cite{Gxx2} in this paper.

We first consider QMC point sets which are obtained by folding digital nets by means of the $b$-TT, and study the worst-case error of QMC rules using such \emph{folded digital nets} in reproducing kernel Hilbert spaces. In our analysis, we need to permit digital nets with ``infinite digit expansions'', which are beyond the scope of the classical definition of digital nets, see for example \cite[Chapter~4]{DP10} and \cite[Chapter~4]{N92b}. To overcome this issue, we consider infinite products of cyclic groups $\Zbinfty$ and the characters on $\Zbinfty$, and define digital nets in $\Zbinfty^s$ by using infinite-column generating matrices, i.e., generating matrices whose each column can contain infinitely many entries different from zero, see Definition \ref{def:digital_net}. Then we discuss the dual net of folded digital nets and the worst-case error in reproducing kernel Hilbert spaces. This is the first contribution of this paper.

Using the results of the above argument, we do the following next. We employ HOPLPSs in prime base $b$ that are folded using the $b$-TT\@. We call such deterministic point sets \emph{folded higher order polynomial lattice point sets} (FHOPLPSs). We consider the Sobolev space $\Hcal_{\alpha}$ as a function space, and prove that the component-by-component construction can find good FHOPLPSs for which QMC integration achieves the optimal convergence rate of the worst-case error in $\Hcal_{\alpha}$. Moreover, we show how to obtain the fast component-by-component construction using the fast Fourier transform in a way analogous to \cite{BDLNP12,Gxx2}. We obtain a construction cost of the algorithm of $O(s\alpha |P|^{\alpha/2}\log |P|)$ arithmetic operations using $O(|P|^{\alpha/2})$ memory. This is the second contribution of this paper. We note that QMC point sets constructed in this way are considered useful in the context of uncertainty quantification, in particular partial differential equations with random coefficients \cite{DKLNS14}, although the investigation of this application is beyond the scope of this paper.

The remainder of this paper is organized as follows. In Section \ref{sec:pre}, we recall some necessary background and notation, including infinite products of cyclic groups $G$, Walsh functions, digital nets, and HOPLPSs. As mentioned above, we define digital nets in $\Zbinfty^s$, instead of those in $[0,1]^s$, by using infinite-column generating matrices. In Section \ref{sec:wc_error}, we first describe the $b$-TT and some properties of folded digital nets, that is, digital nets that are folded by using the $b$-TT, and then study the worst-case error of QMC rules using folded digital nets in reproducing kernel Hilbert spaces. In Section \ref{sec:bound}, we introduce the Sobolev spaces $\Hcal_{\alpha}$ and give an upper bound on the worst-case error for QMC rules using folded digital nets in $\Hcal_{\alpha}$. In Section \ref{sec:cbc}, we prove that the CBC construction can find good FHOPLPSs for which QMC integration achieves the optimal convergence rate of the worst-case error in $\Hcal_{\alpha}$. Finally in Section \ref{sec:fast}, we show how to obtain the fast CBC construction using the fast Fourier transform in a way analogous to \cite{BDLNP12,Gxx2}.

\section{Preliminaries}\label{sec:pre}
Throughout this paper, we use the following notation. Let $\nat$ be the set of positive integers and let $\nat_0:=\nat\cup \{0\}$. Let $\mathbb{C}$ be the set of all complex numbers. For a positive integer $b\ge 2$, let $\ZZ_b$ be a cyclic group with $b$ elements, which is identified with the set $\{0,1,\dots,b-1\}$ equipped with addition modulo $b$.


\subsection{Infinite products of cyclic groups}\label{subsec: infinite digit}
This subsection is based on \cite{Walshbook},
which treats only the one-dimensional dyadic case,
and \cite{Pontryagin},
which constructs Pontryagin duality theory for locally compact abelian groups.
First we consider the one-dimensional case.
Let us define $\Zbinfty := \prod_{i=1}^{\infty} \ZZ_b$.
$\Zbinfty$ is a compact abelian group with the product topology,
where $\ZZ_b$ is considered to be a discrete group.
We denote by $\oplus$ and $\ominus$ addition and subtraction in $\Zbinfty$, respectively.
Let $\nu$ be the product measure on $\Zbinfty$ 
inherited from the uniform measure on $\ZZ_b$; For every cylinder set
$E = \prod_{i=1}^n Z_i \times \prod_{i=n+1}^\infty  \ZZ_b$ with
$Z_i \subset \ZZ_b$ ($1 \leq i \leq n$),
it holds that $\nu(E) = \prod_{i=1}^n (|Z_i|/b)$.

A character on $\Zbinfty$
is a continuous group homomorphism from $\Zbinfty$ to $\{z\in \mathbb{C}: |z|=1\}$, which is a multiplicative group of complex numbers whose absolute value is 1.
We define the $k$-th character $W_{k}$ as follows.

\begin{definition}
Let $b\ge 2$ be a positive integer, and let $\omega:=\exp(2\pi \sqrt{-1}/b)$ be the primitive $b$-th root of unity.
Let $z = (\zeta_1, \zeta_2, \dots)^{\top} \in \Zbinfty$ and $k \in \nat_0$ 
whose $b$-adic expansion is $k = \kappa_0+\kappa_1b+\dots+\kappa_{a-1}b^{a-1}$ with $\kappa_0,\ldots,\kappa_{a-1}\in \ZZ_b$. 
Then the $k$-th character
$W_k \colon \Zbinfty \to \{1, \omega, \dots, \omega^{b-1}\}$
is given by
$$
W_k(z) := \omega^{\kappa_0 \zeta_1 + \cdots + \kappa_{a-1} \zeta_a}.
$$
\end{definition}
\noindent
We note that every character on $\Zbinfty$ is equal to some $W_{k}$, see \cite{Pontryagin}.

The group $\Zbinfty$ can be related to the interval $[0,1]$ through two maps $\pi \colon \Zbinfty \to [0,1]$ and $\sigma \colon [0,1] \to \Zbinfty$.
Let $z = (\zeta_1, \zeta_2, \dots)^{\top} \in \Zbinfty$ and
$x \in [0,1]$ whose $b$-adic expansions are $x=\sum_{i=1}^{\infty}\xi_i b^{-i}$
with $\xi_i \in \ZZ_b$,
which is unique in the sense that infinitely many of the $\xi_i$ are different from $b-1$ if $x \neq 1$ and that all $\xi_i$ are equal to $b-1$ if $x = 1$.
Then the projection map $\pi \colon \Zbinfty \to [0,1]$
is defined as $\pi(z) = \sum_{i=1}^\infty \zeta_i b^{-i}$
and the section map $\sigma \colon [0,1] \to \Zbinfty$
is defined as $\sigma(x) = (\xi_1, \xi_2, \dots)^{\top}$.
In the dyadic case, $\sigma$ is called Fine's map.
By definition, we have that $\pi$ is surjective and $\sigma$ is injective.

These definitions can be generalized to the higher-dimensional case.
Let $\Zbinfty^s$ denote the $s$-ary Cartesian product of $\Zbinfty$.
$\Zbinfty^s$ is also a compact abelian group with the product topology.
The operators $\oplus$ and $\ominus$ denote
addition and subtraction in $\Zbinfty^s$, respectively.
We denote by $\bsnu$ the product measure on $\Zbinfty^s$
inherited from $\nu$.
For integrals on $\Zbinfty^s$, we only consider the measure $\bsnu$.
Hence, in order to emphasize the variable,
we write $\int_{G^s} f(\bsz) \rd \bsz$ instead of $\int_{G^s} f \rd \bsnu$.
We define the $\bsk$-th character $W_{\bsk}$ as follows.
\begin{definition}
Let $b\ge 2$ be a positive integer.
For a dimension $s\in \nat$,
let $\bsz=(z_1,\ldots, z_s) \in \Zbinfty^s$ and $\bsk=(k_1,\ldots, k_s)\in \nat_0^s$.
Then the $\bsk$-th character
$W_{\bsk} \colon \Zbinfty^s \to \{1,\omega_b,\ldots, \omega_b^{b-1}\}$
is defined as
  \begin{align*}
    W_{\bsk}(\bsz) := \prod_{j=1}^s W_{k_j}(z_j) .
  \end{align*}
\end{definition}
\noindent
Note that every character on $\Zbinfty^s$ is equal to some $W_{\bsk}$
as with the one-dimensional case.
For the $s$-dimensional projection and section map,
we use the same symbol $\pi$ and $\sigma$ as in the one-dimensional case.
That is, for $\bsz=(z_1,\ldots, z_s) \in \Zbinfty^s$ and $\bsx= (x_1, \dots, x_s) \in [0,1]^s$,
we define the projection map $\pi \colon \Zbinfty^s \to [0,1]^s$
as $\pi(z) = (\pi(z_1), \dots, \pi(z_s))$
and the section map $\sigma \colon [0,1]^s \to \Zbinfty^s$
as $\sigma(x) = (\sigma(x_1), \dots, \sigma(x_s))$.

The orthogonality of the characters on $\Zbinfty^s$
are described below, see \cite{Pontryagin} for the proof.

\begin{proposition}\label{prop: infinite digit}
The following holds true:
\begin{enumerate}
\item For $k \in \nat_0$, we have
  \begin{align*}
    \int_{\Zbinfty} W_{k}(z)\rd z = \begin{cases}
     1 & \text{if $k=0$},  \\
     0 & \text{otherwise} .
    \end{cases}
  \end{align*}
\item For all $\bsk,\bsl\in \nat_0^s$, we have
  \begin{align*}
    \int_{\Zbinfty^s} W_{\bsk}(\bsz)\overline{W_{\bsl}(\bsz)}\rd \bsz = \begin{cases}
     1 & \text{if $\bsk=\bsl$},  \\
     0 & \text{otherwise} .
    \end{cases}
  \end{align*}
\end{enumerate}
\end{proposition}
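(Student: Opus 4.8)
The plan is to establish part (1) by direct computation using the product structure of the measure $\nu$, and then to derive part (2) by a multiplicative/tensor-product reduction to the one-dimensional case.

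First I would prove part (1). For $k=0$ the character $W_0$ is identically $1$ (its $b$-adic expansion is empty, so the exponent of $\omega$ is the empty sum $=0$), and since $\nu$ is a probability measure, $\int_{G} W_0 \rd z = \nu(G) = 1$. For $k \neq 0$, write the $b$-adic expansion $k = \kappa_0 + \kappa_1 b + \cdots + \kappa_{a-1}b^{a-1}$ and recall that $W_k(z) = \omega^{\kappa_0\zeta_1 + \cdots + \kappa_{a-1}\zeta_a}$ depends only on the first $a$ coordinates $\zeta_1,\dots,\zeta_a$. Hence the integral factors, by the product (cylinder-set) definition of $\nu$, into a finite product
\begin{align*}
  \int_{G} W_k(z)\rd z = \prod_{i=1}^{a}\left(\frac{1}{b}\sum_{\zeta_i=0}^{b-1}\omega^{\kappa_{i-1}\zeta_i}\right).
\end{align*}
Since $k \neq 0$, at least one digit $\kappa_{i-1}$ is nonzero; for that index the inner sum is a full geometric sum $\sum_{\zeta=0}^{b-1}\omega^{\kappa_{i-1}\zeta}$ over a nontrivial $b$-th root of unity $\omega^{\kappa_{i-1}}\neq 1$, which vanishes. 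Therefore the whole product is $0$, giving the claim. The one mildly delicate point here is justifying that the integral over the compact group $G$ of a function depending on finitely many coordinates reduces to the finite average; this is immediate from the cylinder-set formula $\nu(E)=\prod_i(|Z_i|/b)$ stated in the text, so I would simply invoke that.

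Next I would handle part (2) by reducing to part (1). The key observation is that each $W_{k_j}$ takes values in the roots of unity, so $\overline{W_{k_j}(z_j)} = W_{k_j}(z_j)^{-1}$, and the characters form a group under pointwise multiplication: one checks that $W_{\bsk}(\bsz)\overline{W_{\bsl}(\bsz)}$ is again a character $W_{\bsm}(\bsz)$ for a suitable $\bsm$ determined by $\bsk$ and $\bsl$. Because $\bsnu$ is the product measure, the integral factors across coordinates:
\begin{align*}
  \int_{G^s} W_{\bsk}(\bsz)\overline{W_{\bsl}(\bsz)}\rd \bsz
  = \prod_{j=1}^{s}\int_{G} W_{k_j}(z_j)\overline{W_{l_j}(z_j)}\rd z_j.
\end{align*}
It then suffices to show each one-dimensional factor equals $1$ if $k_j=l_j$ and $0$ otherwise, whence the product is $1$ exactly when $\bsk=\bsl$ and $0$ as soon as some coordinate differs.

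The crux, and the step I expect to be the main obstacle, is the one-dimensional claim that $\int_G W_k(z)\overline{W_l(z)}\rd z = \delta_{kl}$. The case $k=l$ is trivial since the integrand is $|W_k|^2\equiv 1$. For $k\neq l$ I would argue that $W_k\overline{W_l}$ equals a single character $W_m$ with $m\neq 0$, and then apply part (1) to conclude the integral is $0$. Establishing $W_k\overline{W_l}=W_m$ with $m\neq 0$ requires showing that the map $k\mapsto W_k$ is an injective homomorphism from $\nat_0$ (under "digitwise" group operations, i.e. the group structure pulled back from $G$'s dual) onto the character group; concretely, $W_k\overline{W_l}$ has exponent $\sum_i(\kappa_{i-1}-\lambda_{i-1})\zeta_i \bmod b$, and if $k\neq l$ some digit difference $\kappa_{i-1}-\lambda_{i-1}$ is nonzero modulo $b$, so the resulting character is nontrivial. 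Injectivity of $k\mapsto W_k$ follows because distinct nonnegative integers have distinct $b$-adic digit strings, forcing distinct exponent functionals. Alternatively, I could bypass the homomorphism bookkeeping entirely and simply expand $W_k\overline{W_l}$ directly, factor the integral over the finitely many relevant coordinates as in part (1), and observe that the coordinate where the digits of $k$ and $l$ disagree contributes a vanishing geometric sum. I would favor this direct computational route, since it keeps the argument self-contained and avoids appealing to the full Pontryagin-duality machinery beyond what part (1) already supplies.
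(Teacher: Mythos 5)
Your proof is correct, but note that the paper does not actually prove this proposition at all: it simply cites Pontryagin's book on topological groups, treating the orthogonality relations as a standard fact about characters on compact abelian groups equipped with Haar measure. Your argument is therefore a genuinely different (and more self-contained) route. The computation in part (1) is sound: $W_k$ depends only on the first $a$ coordinates, the cylinder-set formula for $\nu$ lets the integral factor into $\prod_{i=1}^{a}\bigl(b^{-1}\sum_{\zeta=0}^{b-1}\omega^{\kappa_{i-1}\zeta}\bigr)$, and a nonzero digit $\kappa_{i-1}\in\{1,\dots,b-1\}$ makes $\omega^{\kappa_{i-1}}$ a nontrivial $b$-th root of unity, killing that factor. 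For part (2), your preferred direct route is the cleaner of the two you sketch: after padding the digit expansions of $k_j$ and $l_j$ to a common length, the coordinate where they disagree contributes $\sum_{\zeta=0}^{b-1}\omega^{(\kappa-\lambda)\zeta}=0$ because $\kappa-\lambda\not\equiv 0\pmod b$ when both digits lie in $\{0,\dots,b-1\}$ and differ; the alternative via the carry-free group law on $\nat_0$ works too but requires the bookkeeping you describe. What the paper's citation buys is brevity and placement of the result inside the general Pontryagin duality framework it also invokes elsewhere (e.g., for the completeness of the character system); what your computation buys is that the proposition becomes verifiable from the definitions in Subsection 2.1 alone, with no appeal to the general theory.
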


Moreover, we need a lemma on the partial sum of characters referred to as Paley's lemma. 
Since the proof is essentially the same as \cite[Paley's lemma]{Walshbook}, we omit it.
\begin{lemma}\label{lem:Paley}
Let $n$ be a positive integer.
For $\bsz = (z_1, \dots, z_s) \in \Zbinfty^s$ with
$z_i = (\zeta_{i,1}, \zeta_{i,2}, \dots)^{\top} \in \Zbinfty$, we have
$$
\sum_{\bsk <b^n} W_{\bsk}(\bsz)
= \begin{cases}
b^{sn} &\text{if $\zeta_{i,1} = \cdots = \zeta_{i,n} = 0$ for all $1 \leq i \leq s$}, \\
0 &\text{otherwise},
\end{cases}
$$
where $\bsk=(k_1, \dots, k_s) < b^n$ means that $k_i < b^n$ holds
for every $1 \leq i \leq s$.
\end{lemma}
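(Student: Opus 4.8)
The plan is to reduce the $s$-dimensional sum to a product of $s$ one-dimensional sums and then evaluate each factor by a geometric-series computation. Since $W_{\bsk}(\bsz) = \prod_{j=1}^s W_{k_j}(z_j)$ and the constraint $\bsk < b^n$ means exactly that $k_j < b^n$ for every $j$, the summation over $\bsk$ decouples across the coordinates, giving $\sum_{\bsk < b^n} W_{\bsk}(\bsz) = \prod_{j=1}^s \bigl( \sum_{k=0}^{b^n-1} W_k(z_j) \bigr)$. It therefore suffices to prove the claim in dimension one, namely that $\sum_{k=0}^{b^n-1} W_k(z)$ equals $b^n$ when $\zeta_1 = \cdots = \zeta_n = 0$ and vanishes otherwise, for $z = (\zeta_1, \zeta_2, \dots)^{\top} \in \Zbinfty$.

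For the one-dimensional sum, I would parametrize the index set $\{0, 1, \dots, b^n - 1\}$ by its $b$-adic digits. Every such $k$ has a representation $k = \kappa_0 + \kappa_1 b + \cdots + \kappa_{n-1} b^{n-1}$ with each $\kappa_i \in \{0, \dots, b-1\}$, where we allow the top digits to be zero so that exactly $n$ digits appear; this does not change $W_k$ since the added exponents vanish. The assignment $(\kappa_0, \dots, \kappa_{n-1}) \mapsto k$ is a bijection onto $\{0,\dots,b^n-1\}$, and by the definition of the character, $W_k(z) = \omega^{\kappa_0 \zeta_1 + \cdots + \kappa_{n-1} \zeta_n}$. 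Hence the sum over $k$ becomes a sum over all digit tuples, which factors into a product of independent one-digit sums:
\[
\sum_{k=0}^{b^n-1} W_k(z) = \prod_{i=1}^{n} \Bigl( \sum_{\kappa=0}^{b-1} \omega^{\kappa \zeta_i} \Bigr).
\]

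Each inner factor is a geometric sum that I would evaluate directly. If $\zeta_i = 0$, then every summand equals $1$ and the factor is $b$; if $\zeta_i \in \{1, \dots, b-1\}$, then $\omega^{\zeta_i} \neq 1$ (as $\omega$ is a primitive $b$-th root of unity), while $(\omega^{\zeta_i})^b = 1$, so the geometric sum equals $0$. Consequently the product equals $b^n$ precisely when $\zeta_1 = \cdots = \zeta_n = 0$ and is $0$ as soon as one of these digits is nonzero. Reinserting this into the product over $j$ yields $b^{sn}$ when $\zeta_{i,1} = \cdots = \zeta_{i,n} = 0$ for all $1 \le i \le s$, and $0$ whenever some coordinate violates this, which is the assertion.

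This argument involves no genuine obstacle; it is the natural $b$-adic, $s$-dimensional analogue of the classical dyadic Paley lemma, and the only points requiring mild care are the digit-tuple bijection and the observation that padding a $b$-adic expansion with high-order zeros leaves $W_k$ unchanged. The decoupling across coordinates and the geometric-sum evaluation are entirely routine, which is presumably why the authors chose to omit the proof.
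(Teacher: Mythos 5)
Your proof is correct and is exactly the standard argument the paper alludes to when it omits the proof with the remark that it is ``essentially the same as'' the classical Paley's lemma: decouple the coordinates, pad the $b$-adic digits of $k<b^n$ to length $n$, factor the sum into $n$ independent geometric sums of $b$-th roots of unity, and evaluate each as $b$ or $0$ according to whether the corresponding digit $\zeta_i$ vanishes. Nothing is missing.
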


Properties of $\pi$ and $\sigma$ are described below.
\begin{proposition}\label{prop: map pi and rho}
We have the following:
\begin{enumerate}
\item $\pi$ is a continuous map.
\item $\pi \circ \sigma = \mathrm{id}_{[0,1]^s}$.
\item For $f \in L^1(\Zbinfty^s)$, we have
$$
\int_{\Zbinfty^s} f(\bsz) \rd \bsz = \int_{[0,1]^s} f (\sigma(\bsx)) \rd \bsx.
$$
\item For $f \in L^1([0,1]^s)$, we have
$$
\int_{[0,1]^s} f(\bsx)  \rd \bsx = \int_{\Zbinfty^s} f(\pi(\bsz)) \rd \bsz.
$$
\end{enumerate}
\end{proposition}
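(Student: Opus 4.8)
The plan is to prove the four assertions in turn and to deduce (4) from (2) and (3). For (1) I would exhibit $\pi$ as a uniform limit of continuous maps. Define $\pi_N\colon\Zbinfty^s\to[0,1]^s$ by letting the $i$-th component of $\pi_N(\bsz)$ be $\sum_{j=1}^N \zeta_{i,j}b^{-j}$; each $\pi_N$ depends on only finitely many of the discrete coordinates $\zeta_{i,j}$ and is therefore continuous, while each component of $\pi(\bsz)-\pi_N(\bsz)$ is bounded in absolute value by $\sum_{j>N}(b-1)b^{-j}=b^{-N}$, uniformly in $\bsz$. Hence $\pi$ is continuous, and in particular Borel measurable. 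Assertion (2) is immediate from the definitions: if $x_i=\sum_{j}\xi_{i,j}b^{-j}$ is the canonical $b$-adic expansion, then $\sigma(\bsx)$ has digits $\xi_{i,j}$, so $\pi(\sigma(\bsx))=\bsx$. (Note that $\sigma\circ\pi$ equals the identity only $\bsnu$-almost everywhere, since $\pi$ fails to be injective on the null set of points with a terminating expansion.)

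The substance of the proposition is (3), which I would restate as the claim that $\sigma$ pushes the Lebesgue measure $\lambda$ on $[0,1]^s$ forward to $\bsnu$. The preimage under $\sigma$ of a cylinder set that fixes, in the $i$-th coordinate, the first $n_i$ digits to prescribed values $a_{i,1},\dots,a_{i,n_i}$ is the half-open $b$-adic box $\prod_{i=1}^s [y_i, y_i+b^{-n_i})$ with $y_i=\sum_{j=1}^{n_i} a_{i,j}b^{-j}$; indeed a real lies in $[y_i,y_i+b^{-n_i})$ exactly when its canonical first $n_i$ digits are $a_{i,1},\dots,a_{i,n_i}$. This shows in particular that $\sigma$ is measurable, and that $\lambda(\sigma^{-1}(E))=\prod_i b^{-n_i}=\bsnu(E)$ for every such cylinder $E$. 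Since the cylinder sets are closed under finite intersection and generate the product $\sigma$-algebra, and both $\sigma_*\lambda$ and $\bsnu$ are probability measures, the uniqueness theorem for measures (Dynkin's theorem) gives $\sigma_*\lambda=\bsnu$. The change-of-variables formula for a push-forward measure then yields $\int_{\Zbinfty^s} f(\bsz)\rd\bsz=\int_{[0,1]^s}f(\sigma(\bsx))\rd\bsx$, first for nonnegative measurable $f$ (with both sides allowed to be infinite) and hence for all $f\in L^1(\Zbinfty^s)$.

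Finally, (4) follows by combining (2) and (3). Given $f\in L^1([0,1]^s)$, I would apply the nonnegative form of the change-of-variables identity from (3) to $|f|\circ\pi$; using $\pi\circ\sigma=\mathrm{id}$ from (2),
\begin{align*}
\int_{\Zbinfty^s}|f(\pi(\bsz))|\rd\bsz=\int_{[0,1]^s}|f(\pi(\sigma(\bsx)))|\rd\bsx=\int_{[0,1]^s}|f(\bsx)|\rd\bsx<\infty,
\end{align*}
so that $f\circ\pi\in L^1(\Zbinfty^s)$; applying (3) to $f\circ\pi$ and invoking (2) once more gives the desired identity. The step I expect to be the main obstacle is the careful bookkeeping in (3): checking that $\sigma$ is well defined and measurable in the presence of the non-uniqueness of $b$-adic expansions, and that the preimage of a cylinder set really is a $b$-adic box of the asserted measure, the countably many ambiguous ($b$-adic rational) endpoints forming a null set that does not affect the computation.
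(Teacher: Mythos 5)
Your proof is correct, and it is considerably more self-contained than what the paper actually does: the paper disposes of Items~1 and~2 as following ``by definition'' and, for Items~3 and~4, simply cites \cite[Theorem~5]{Walshbook} (a reference that, as the paper notes, treats only the one-dimensional dyadic case, so the reader is implicitly asked to extend it to general base $b$ and dimension $s$). Your route --- establishing $\sigma_*\lambda=\bsnu$ by checking agreement on the generating $\pi$-system of cylinder sets, invoking uniqueness of measures, and then deducing Item~4 from Items~2 and~3 via the push-forward change-of-variables formula --- is the standard complete argument, and it correctly isolates the one delicate point, namely that the non-uniqueness of $b$-adic expansions only affects the countable null set of $b$-adic rationals (plus the endpoint $x=1$, where the preimage of the all-$(b-1)$ cylinder is the closed rather than half-open box, again a measure-zero discrepancy). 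Your uniform-limit argument for the continuity of $\pi$ is also a clean way to make precise what the paper waves off as definitional. What your approach buys is a proof that works directly for all $b\ge 2$ and all $s$ without appeal to an external source; what the paper's citation buys is brevity. The only point worth flagging is the mild bootstrapping needed in Item~4 to see that $f\circ\pi$ is well defined on $L^1$-equivalence classes (i.e.\ that $\pi$ pulls Lebesgue-null sets back to $\bsnu$-null sets); your argument via nonnegative indicator functions handles this implicitly, but you could state it explicitly.
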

Items~1 and 2 of Proposition~\ref{prop: map pi and rho}
follow by definition.
For the proof of Items~3 and 4 of Proposition~\ref{prop: map pi and rho},
we refer to \cite[Theorem~5]{Walshbook}.

\subsection{Walsh functions}\label{subsec:walsh}
Walsh functions play a central role in the analysis of digital nets. We refer to \cite[Appendix~A]{DP10} for general information on Walsh functions. We first give the definition for the one-dimensional case.

\begin{definition}
Let $b\ge 2$ be a positive integer and let $\omega_b=\exp(2\pi \sqrt{-1}/b)$. We denote the $b$-adic expansion of $k\in \nat_0$ by $k = \kappa_0+\kappa_1b+\dots+\kappa_{a-1}b^{a-1}$ with $\kappa_0,\ldots,\kappa_{a-1}\in \ZZ_b$. Then the $k$-th $b$-adic Walsh function ${}_b\wal_k \colon [0,1]\to \{1,\omega_b,\dots,\omega_b^{b-1}\}$ is defined as
  \begin{align*}
    {}_b\wal_k(x) := \omega_b^{\kappa_0\xi_1+\dots+\kappa_{a-1}\xi_a} ,
  \end{align*}
for $x\in [0,1]$ whose $b$-adic expansion is given by $x=\xi_1b^{-1}+\xi_2b^{-2}+\cdots$, which is unique in the sense that infinitely many of the $\xi_i$ are different from $b-1$ if $x \neq 1$ and that all $\xi_i$ are equal to $b-1$ if $x = 1$.
\end{definition}
\noindent
We note that Walsh functions are usually defined on $[0,1)$, whereas they are defined on $[0,1]$ specially for this study.
This definition can be generalized to the higher-dimensional case.

\begin{definition}
Let $b\ge 2$ be a positive integer. For a dimension $s\in \nat$, let $\bsx=(x_1,\ldots, x_s)\in [0,1]^s$ and $\bsk=(k_1,\ldots, k_s)\in \nat_0^s$. Then the $\bsk$-th $b$-adic Walsh function ${}_b\wal_{\bsk} \colon [0,1]^s \to \{1,\omega_b,\ldots, \omega_b^{b-1}\}$ is defined as
  \begin{align*}
    {}_b\wal_{\bsk}(\bsx) := \prod_{j=1}^s {}_b\wal_{k_j}(x_j) .
  \end{align*}
\end{definition}
Since we shall always use Walsh functions in a fixed base $b$, we omit the subscript and simply write $\wal_k$ or $\wal_{\bsk}$ in this paper.
By the definition of characters and Walsh functions, we can see that
\begin{equation}\label{eq:WandWalsh}
\wal_{\bsk}(\bsx) = W_{\bsk}(\sigma (\bsx)).
\end{equation}
 Some important properties of Walsh functions, used in this paper, are described below, see \cite[Appendix~A.2]{DP10} for the proof.

\begin{proposition}\label{prop:walsh}
We have the following:
\begin{enumerate}
\item For $k\in \nat_0$, we have
  \begin{align*}
    \int_0^1 \wal_k(x)\rd x = \begin{cases}
     1 & \text{if $k=0$},  \\
     0 & \text{otherwise} .
    \end{cases}
  \end{align*}
\item For all $\bsk,\bsl\in \nat_0^s$, we have
  \begin{align*}
    \int_{[0,1]^s} \wal_{\bsk}(\bsx)\overline{\wal_{\bsl}(\bsx)}\rd \bsx = \begin{cases}
     1 & \text{if $\bsk=\bsl$},  \\
     0 & \text{otherwise} .
    \end{cases}
  \end{align*}
\item The system $\{\wal_{\bsk}: \bsk\in \nat_0^s\}$ is a complete orthonormal system in $L^2([0,1]^s)$ for any $s\in \nat$.
\end{enumerate}
\end{proposition}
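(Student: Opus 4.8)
The plan is to transport all three statements to the group $\Zbinfty^s$ via the identity \eqref{eq:WandWalsh}, where the corresponding facts for characters are already available. First I would dispose of Items~1 and 2, which are immediate. For Item~1, write $\wal_k(x) = W_k(\sigma(x))$ and apply Item~3 of Proposition~\ref{prop: map pi and rho} to $f = W_k$ in the one-dimensional case, giving $\int_0^1 \wal_k(x)\rd x = \int_{\Zbinfty} W_k(z)\rd z$; the claim then follows from Item~1 of Proposition~\ref{prop: infinite digit}. For Item~2, since $\wal_{\bsk}(\bsx)\overline{\wal_{\bsl}(\bsx)} = W_{\bsk}(\sigma(\bsx))\overline{W_{\bsl}(\sigma(\bsx))}$ by \eqref{eq:WandWalsh}, the same transfer identity in dimension $s$, applied to $f = W_{\bsk}\overline{W_{\bsl}}$, reduces the integral to $\int_{\Zbinfty^s} W_{\bsk}(\bsz)\overline{W_{\bsl}(\bsz)}\rd \bsz$, and Item~2 of Proposition~\ref{prop: infinite digit} finishes it. Orthonormality in Item~3 is then the special case $\bsk=\bsl$ together with $|\wal_{\bsk}(\bsx)| = 1$.

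The substance of Item~3 is \emph{completeness}, and this is the step I expect to be the main obstacle. My approach is to pull an arbitrary $f\in L^2([0,1]^s)$ back to $\Zbinfty^s$ by setting $g := f\circ \pi$ and to exploit that $\sigma\circ\pi = \mathrm{id}$ holds $\bsnu$-almost everywhere. Indeed, $\sigma(\pi(\bsz))\ne \bsz$ only when some coordinate of $\bsz$ carries a non-canonical digit string (one eventually constant equal to $b-1$), and the set of such $\bsz$ is a countable union of $\bsnu$-null cylinder sets. Consequently $\wal_{\bsk}(\pi(\bsz)) = W_{\bsk}(\sigma(\pi(\bsz))) = W_{\bsk}(\bsz)$ for $\bsnu$-a.e.\ $\bsz$, and Item~4 of Proposition~\ref{prop: map pi and rho} gives
\begin{align*}
\int_{[0,1]^s} f(\bsx)\overline{\wal_{\bsk}(\bsx)}\rd \bsx = \int_{\Zbinfty^s} g(\bsz)\overline{W_{\bsk}(\bsz)}\rd \bsz .
\end{align*}
Thus if $f$ is orthogonal to every $\wal_{\bsk}$, then $g$ is orthogonal to every character $W_{\bsk}$ on $\Zbinfty^s$.

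To conclude I would invoke that the characters $\{W_{\bsk}\}$ exhaust the dual of the compact abelian group $\Zbinfty^s$ and hence form a complete orthonormal system in $L^2(\Zbinfty^s)$, which is the standard fact for compact abelian groups already cited via \cite{Pontryagin}. Therefore $g = 0$ in $L^2(\Zbinfty^s)$, and applying Item~4 of Proposition~\ref{prop: map pi and rho} once more to $|f|^2$ yields $\int_{[0,1]^s}|f(\bsx)|^2\rd \bsx = \int_{\Zbinfty^s}|g(\bsz)|^2\rd \bsz = 0$, so $f = 0$ almost everywhere. The only delicate point is the almost-everywhere identification $\sigma\circ\pi=\mathrm{id}$, which must be handled carefully since $\sigma$ is genuinely not a two-sided inverse of $\pi$; everything else is bookkeeping.

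Alternatively, if one prefers a self-contained argument avoiding Pontryagin duality, completeness can be obtained directly from Paley's lemma (Lemma~\ref{lem:Paley}): translating that identity through \eqref{eq:WandWalsh} shows that $b^{-sn}\sum_{\bsk<b^n}\wal_{\bsk}$ is, up to the usual null set, the indicator of the elementary $b$-adic box $[0,b^{-n})^s$, and suitable modulations of such finite Walsh sums reproduce the indicators of all elementary $b$-adic boxes. Since these boxes generate the Borel $\sigma$-algebra on $[0,1]^s$, their linear span is dense in $L^2([0,1]^s)$, which is precisely the completeness asserted in Item~3.
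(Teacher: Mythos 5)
Your proof is correct. Note, however, that the paper does not actually prove Proposition~\ref{prop:walsh}: it simply cites \cite[Appendix~A.2]{DP10}, where the argument is carried out directly on $[0,1)^s$ (orthogonality by explicit digit-wise computation, completeness by observing that the span of $\{\wal_{\bsk}:\bsk<b^n\}$ coincides with the span of indicators of elementary $b$-adic boxes of order $n$). Your \emph{primary} route is genuinely different: you transport everything to the compact group $\Zbinfty^s$ via $\wal_{\bsk}=W_{\bsk}\circ\sigma$, use Items~3 and~4 of Proposition~\ref{prop: map pi and rho} to move integrals back and forth, and settle completeness by the Peter--Weyl/Pontryagin fact that the characters $\{W_{\bsk}\}$ are complete in $L^2(\Zbinfty^s)$. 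This buys internal consistency with the machinery the paper sets up in Subsection~\ref{subsec: infinite digit} and makes the orthogonality items one-line consequences of Proposition~\ref{prop: infinite digit}; the price is that you must import completeness of the dual of a compact abelian group as a black box, and you must handle the a.e.\ identity $\sigma\circ\pi=\mathrm{id}$. You do flag and correctly resolve that delicate point: the exceptional set consists, coordinatewise, of digit strings eventually constant equal to $b-1$, a countable and hence $\bsnu$-null set (each such point lies in cylinders of measure $b^{-n}$ for every $n$), so $\wal_{\bsk}\circ\pi=W_{\bsk}$ a.e.\ and the coefficient transfer $\hat f(\bsk)=\int_{\Zbinfty^s}(f\circ\pi)\overline{W_{\bsk}}\rd\bsz$ is legitimate; the final step $\int|f|^2=\int|f\circ\pi|^2=0$ via Item~4 is also sound. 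Your alternative sketch via Paley's lemma (Lemma~\ref{lem:Paley}) is essentially the classical proof from the cited reference, and the identification of $b^{-sn}\sum_{\bsk<b^n}\wal_{\bsk}$ with the indicator of $[0,b^{-n})^s$ is exact (no null set needed, since the canonical expansion has $\xi_1=\cdots=\xi_n=0$ precisely on that box); to make it fully rigorous you would still need to spell out that translating/modulating recovers all elementary boxes, which is routine. Either route is acceptable.
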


From Item 3 of Proposition \ref{prop:walsh}, we define the Walsh series of $f\in L^2([0,1]^s)$ by
  \begin{align*}
     \sum_{\bsk\in \nat_0^s}\hat{f}(\bsk)\wal_{\bsk} ,
  \end{align*}
where the $\bsk$-th Walsh coefficient is given by
  \begin{align*}
     \hat{f}(\bsk) = \int_{[0,1]^s} f(\bsx)\overline{\wal_{\bsk}(\bsx)}\rd \bsx .
  \end{align*}
We refer to \cite[Appendix~A.3]{DP10} and \cite[Lemma~17]{GSYxx} for a discussion on the pointwise absolute convergence of the Walsh series.

\subsection{Digital nets}\label{subsec:digital_net}
We introduce the general definition of digital nets in $\Zbinfty^s$ by using infinite-column generating matrices for an arbitrary positive integer $b\ge 2$.

\begin{definition}\label{def:digital_net}
For $m\in \nat$, let $C_1,\ldots,C_s\in \ZZ_b^{\nat \times m}$. For each non-negative integer $h$ with $0\le h<b^m$, we denote its $b$-adic expansion by $h=\sum_{i=0}^{m-1}\eta_ib^i$. Let $\bsz_h=(z_{h,1},\ldots,z_{h,s})\in \Zbinfty^s$ be given by
  \begin{align*}
     z_{h,j} = C_j\cdot (\eta_0, \eta_1,\ldots, \eta_{m-1})^{\top} ,
  \end{align*}
for $1\le j\le s$. We call $\mathcal{P}=\{\bsz_0,\bsz_1,\ldots,\bsz_{b^m-1}\}$ a \emph{digital net in $\Zbinfty^s$} with generating matrices $C_1,\ldots,C_s$.
\end{definition}

We note that every digital net in $\Zbinfty^s$ is a $\ZZ_b$-module of $\Zbinfty^s$ as well as a subgroup of $\Zbinfty^s$ by Definition~\ref{def:digital_net}. We call $P \subset [0,1]^s$ a \emph{digital net in $[0,1]^s$} over $\ZZ_b$
if there exists a $\mathcal{P}$ which is a digital net in $\Zbinfty^s$ 
with $P = \pi(\mathcal{P})$.
In this paper, we use digital nets in $[0,1]^s$ over $\ZZ_b$ as quadrature points.

\begin{remark}\label{rem:infinite_digits}
Historically, (classical) digital nets are subsets of $[0,1]^s$ constructed by finite-column generating matrices, i.e., generating matrices whose each column consists of only finitely many entries different from zero. If $P$ is a classical digital net, we have that $\sigma(P)$ is a digital net in $\Zbinfty^s$ and that $\pi(\sigma(P)) = P$.
Hence a classical digital net $P$ can be considered as a digital net in $[0,1]^s$ over $\ZZ_b$.
Our definition of digital nets permits digital nets with ``infinite digit expansions''.
Although the concept of infinite digit expansions has already appeared in the definition of digital sequences \cite[Chapter~8]{NX01}, it has not been discussed for digital nets in the literature as far as the authors know.
\end{remark}

The \emph{dual net} of a digital net plays an important role in the subsequent analysis. For a digital net $\mathcal{P}$, its dual net, denoted by $\mathcal{P}^{\perp}$, is defined as follows.
\begin{definition}\label{def:dual_net}
Let $\mathcal{P}$ be a digital net in $\Zbinfty^s$ with generating matrices $C_1,\ldots,C_s\in \ZZ_b^{\nat \times m}$.
The dual net of $\mathcal{P}$ is defined as
  \begin{align*}
     \mathcal{P}^{\perp}:=\{\bsk\in \nat_0^s \colon C_1^{\top}\vec{k}_1\oplus \dots \oplus C_s^{\top}\vec{k}_s= \bszero \in \ZZ_b^m\} ,
  \end{align*}
where, for $1\le j\le s$, we write $\vec{k}_j=(\kappa_{0,j},\kappa_{1,j},\dots)^{\top}\in \Zbinfty$ for $k_j$ with its $b$-adic expansion $k_j=\kappa_{0,j}+\kappa_{1,j}b+\cdots$, which is actually a finite expansion.
\end{definition}

Using the characters on $\Zbinfty^s$, the dual net $\mathcal{P}^{\perp}$ is also given by
  \begin{align*}
     \mathcal{P}^{\perp}=\{\bsk\in \nat_0^s \colon W_{\bsk}(\bsz) = 1\quad \text{for all $\bsz\in \Pcal$}\} .
  \end{align*}
A similar definition of the dual net has been introduced in \cite[Definition~2.5]{DM13} for digital nets in $\Zbinfty^s$ with finite digit expansions. Because of orthogonality of the characters, we have the following.
\begin{lemma}\label{lem:dual_Walsh}
Let $\mathcal{P}$ be a digital net in $\Zbinfty^s$, and let $\mathcal{P}^{\perp}$ be its dual net. Then we have
  \begin{align*}
     \sum_{\bsz\in \mathcal{P}} W_{\bsk}(\bsz) = \begin{cases}
     |\mathcal{P}| & \text{if $\bsk\in \mathcal{P}^{\perp}$},  \\
     0 & \text{otherwise} .
    \end{cases}
  \end{align*}
\end{lemma}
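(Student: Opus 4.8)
The plan is to compute the sum $\sum_{\bsz\in\mathcal{P}} W_{\bsk}(\bsz)$ by exploiting the group-homomorphism property of the character $W_{\bsk}$ together with the module structure of the digital net $\mathcal{P}$. Since every digital net in $\Zbinfty^s$ is a subgroup (indeed a $\ZZ_b$-module) of $\Zbinfty^s$, the restriction of the character $W_{\bsk}$ to $\mathcal{P}$ is a group homomorphism from the finite abelian group $\mathcal{P}$ into the circle group. The standard orthogonality principle for characters of finite abelian groups then says that such a character sum equals $|\mathcal{P}|$ if the character is trivial on $\mathcal{P}$ and $0$ otherwise. The crux is therefore to identify ``$W_{\bsk}$ is trivial on $\mathcal{P}$'' with the membership condition ``$\bsk\in\mathcal{P}^{\perp}$''.

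First I would recall that, by the character reformulation of the dual net stated just before the lemma, we have $\mathcal{P}^{\perp}=\{\bsk\in\nat_0^s : W_{\bsk}(\bsz)=1\text{ for all }\bsz\in\mathcal{P}\}$. Hence the condition $\bsk\in\mathcal{P}^{\perp}$ is exactly the statement that $W_{\bsk}$ restricted to $\mathcal{P}$ is the trivial character, in which case every summand equals $1$ and the sum is $|\mathcal{P}|$. This disposes of the first case immediately.

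For the second case, suppose $\bsk\notin\mathcal{P}^{\perp}$, so there exists some $\bsz_0\in\mathcal{P}$ with $W_{\bsk}(\bsz_0)\neq 1$. The key step is the translation trick: since $\mathcal{P}$ is a subgroup of $\Zbinfty^s$ under $\oplus$, the map $\bsz\mapsto \bsz_0\oplus\bsz$ is a bijection of $\mathcal{P}$ onto itself. Using that $W_{\bsk}$ is a homomorphism, i.e. $W_{\bsk}(\bsz_0\oplus\bsz)=W_{\bsk}(\bsz_0)W_{\bsk}(\bsz)$, I would write
  \begin{align*}
     \sum_{\bsz\in\mathcal{P}} W_{\bsk}(\bsz) = \sum_{\bsz\in\mathcal{P}} W_{\bsk}(\bsz_0\oplus\bsz) = W_{\bsk}(\bsz_0)\sum_{\bsz\in\mathcal{P}} W_{\bsk}(\bsz).
  \end{align*}
Since $W_{\bsk}(\bsz_0)\neq 1$, this forces $\sum_{\bsz\in\mathcal{P}} W_{\bsk}(\bsz)=0$, completing the proof.

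The main subtlety worth checking is that $W_{\bsk}$ genuinely is a group homomorphism with respect to $\oplus$ on $\Zbinfty^s$, so that both the reindexing and the factoring are legitimate; this is precisely the defining property of a character on $\Zbinfty^s$, and it is where the infinite-digit-expansion machinery of Subsection~\ref{subsec: infinite digit} earns its keep, since we need the homomorphism property to hold on all of $\Zbinfty^s$ rather than only on classical finite-digit nets. Everything else is the routine orthogonality-of-characters argument, so I do not expect any real obstacle beyond verifying that the subgroup and homomorphism structures are in place, both of which are supplied by Definition~\ref{def:digital_net} and the character reformulation preceding the lemma.
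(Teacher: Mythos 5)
Your proposal is correct and is exactly the argument the paper has in mind: the paper omits the proof with the remark ``Because of orthogonality of the characters,'' and the standard proof of that orthogonality for the finite subgroup $\mathcal{P}\subseteq \Zbinfty^s$ is precisely your translation trick combined with the character reformulation of $\mathcal{P}^{\perp}$ given just before the lemma. No gaps.
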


\subsection{Higher order polynomial lattice rules}
We define higher order polynomial lattice point sets as subsets of $\Zbinfty^s$, whose construction is based on rational functions over finite fields \cite{DP07}. Now let $b$ be a prime. We denote by $\ZZ_b[x]$ the set of all polynomials in $\ZZ_b$ and by $\ZZ_b((x^{-1}))$ the field of formal Laurent series in $\ZZ_b$. Every element of $\ZZ_b((x^{-1}))$ can be expressed in the form
  \begin{align*}
    L=\sum_{l=w}^{\infty}t_lx^{-l},
  \end{align*}
for some integer $w$ and $t_l\in \ZZ_b$. When $w>1$, we set $t_1=\cdots=t_{w-1}=0$. For $n\in \nat$, we define the mapping $v_n\colon \ZZ_b((x^{-1})) \to \Zbinfty$ by
  \begin{align*}
    v_n\left( \sum_{l=w}^{\infty}t_l x^{-l}\right) = (t_1,\ldots,t_n,0,0,\ldots)^{\top}.
  \end{align*}
We shall often identify an integer $n=n_0+n_1b+\cdots\in \nat_0$ with a polynomial $n(x)=n_0+n_1x+\cdots \in \ZZ_b[x]$. Then HOPLPSs are constructed as follows.

\begin{definition}\label{def:hopoly}
For $m, n,s \in \nat$ with $m\le n$ let $p \in \ZZ_b[x]$ with $\deg(p)=n$ and let $\bsq=(q_1,\ldots,q_s) \in (\ZZ_b[x])^s$. A higher order polynomial lattice point set (HOPLPS) $\Pcal(\bsq,p)\subset \Zbinfty^s$ is given by
  \begin{align*}
    \bsz_h &:= \left( v_{n}\left( \frac{h(x)q_1(x)}{p(x)} \right) , \ldots , v_{n}\left( \frac{h(x)q_s(x)}{p(x)} \right) \right) ,
  \end{align*}
for an integer $0\le h <b^m$, which is identified with $h(x)\in \ZZ_b[x]$. A QMC rule using $\pi(\Pcal(\bsq,p))$ is called a higher order polynomial lattice rule with modulus $p$ and generating vector $\bsq$.
\end{definition}

We define the truncated polynomial $\tr_n(k)$, associated with $k\in \nat_0$ whose $b$-adic expansion is given by $k=\kappa_0+\kappa_1 b+\cdots$, as
  \begin{align*}
     \tr_n(k)(x)=\kappa_0+\kappa_1 x+\dots +\kappa_{n-1} x^{n-1} .
  \end{align*}
Then the \emph{dual polynomial lattice} of a HOPLPS $\Pcal(\bsq,p)$ plays the same role as the dual net of a digital net. It is defined as follows.
\begin{definition}\label{def:hopoly_dual_net}
For $m,n,s\in \nat$ with $m\le n$, let $\Pcal(\bsq,p)$ be a HOPLPS. The dual polynomial lattice of $\Pcal(\bsq,p)$, denoted by $\Pcal^{\perp}(\bsq,p)$, is defined as
  \begin{align*}
     \Pcal^{\perp}(\bsq,p) := \{ & \bsk=(k_1,\dots,k_s)\in \nat_0^s \colon\\
                                        & \tr_n(k_1)q_1 + \dots + \tr_n(k_s)q_s\equiv a \pmod p\quad \text{with $\deg(a)<n-m$}\} .
  \end{align*}
\end{definition}
\noindent Accordingly, Lemma \ref{lem:dual_Walsh} is now replaced by the following lemma.
\begin{lemma}\label{lem:dual_poly_Walsh}
For $m, n,s\in \nat$ with $m\le n$, let $\Pcal(\bsq,p)$ be a HOPLPS, and let $\Pcal^{\perp}(\bsq,p)$ be its dual polynomial lattice. Then we have
   \begin{align*}
     \sum_{\bsz\in \Pcal(\bsq,p)}W_{\bsk}(\bsz) = \begin{cases}
     b^m & \text{if $\bsk\in \Pcal^{\perp}(\bsq,p)$},  \\
     0 & \text{otherwise} .
    \end{cases}
  \end{align*}
\end{lemma}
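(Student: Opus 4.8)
The plan is to reduce this to the general dual-net formula, Lemma~\ref{lem:dual_Walsh}, by recognizing a HOPLPS $\Pcal(\bsq,p)$ as a digital net in $\Zbinfty^s$ with explicit generating matrices, and then showing that its dual net $\Pcal^\perp$ in the sense of Definition~\ref{def:dual_net} coincides with the dual polynomial lattice $\Pcal^\perp(\bsq,p)$ of Definition~\ref{def:hopoly_dual_net}. Once these two identifications are in place, Lemma~\ref{lem:dual_poly_Walsh} follows immediately: $|\Pcal(\bsq,p)|=b^m$, so the value $b^m$ replaces the generic $|\Pcal|$, and the two cases ($\bsk\in\Pcal^\perp(\bsq,p)$ versus not) match the two cases of Lemma~\ref{lem:dual_Walsh}.

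First I would make the generating-matrix identification explicit. Fix $j$ and write $q_j(x)/p(x)=\sum_{l=1}^\infty u^{(j)}_l x^{-l}\in\ZZ_b((x^{-1}))$. For $0\le h<b^m$ with $h(x)=\eta_0+\eta_1 x+\cdots+\eta_{m-1}x^{m-1}$, the coefficient of $x^{-r}$ in $h(x)q_j(x)/p(x)$ is $\sum_{i=0}^{m-1}\eta_i\,u^{(j)}_{r+i}$, so that $v_n(h q_j/p)=C_j\cdot(\eta_0,\dots,\eta_{m-1})^\top$ where $C_j\in\ZZ_b^{\nat\times m}$ has $(r,i)$-entry $u^{(j)}_{r+i}$ for $1\le r\le n$ and $0$ for $r>n$. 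Thus $\Pcal(\bsq,p)$ is exactly the digital net of Definition~\ref{def:digital_net} with these $C_j$, and Lemma~\ref{lem:dual_Walsh} applies verbatim with $|\Pcal(\bsq,p)|=b^m$.

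The substantive step, which I expect to be the main obstacle, is the equality of the two notions of dual. Here I would unwind the condition $C_1^\top\vec{k}_1\oplus\cdots\oplus C_s^\top\vec{k}_s=\bszero\in\ZZ_b^m$. Because $C_j$ only has its first $n$ rows nonzero, the inner product $C_j^\top\vec{k}_j$ depends on $k_j$ only through its first $n$ digits, i.e.\ through $\tr_n(k_j)$; this is precisely why the truncation operator appears in Definition~\ref{def:hopoly_dual_net}. Concretely, the $i$-th coordinate ($0\le i\le m-1$) of $C_j^\top\vec{k}_j$ is $\sum_{r=1}^n \kappa_{r-1,j}\,u^{(j)}_{r+i}$, which one recognizes as the coefficient of $x^{-(i+1)}$ in $\tr_n(k_j)(x)q_j(x)/p(x)$. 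Summing over $j$, the vanishing of all $m$ coordinates says exactly that the Laurent series $\bigl(\sum_{j=1}^s\tr_n(k_j)q_j/p\bigr)$ has vanishing coefficients at $x^{-1},\dots,x^{-m}$. Writing $\sum_j\tr_n(k_j)q_j=a+b\,p$ by polynomial division, the fractional part $a/p$ contributing negative powers vanishes in degrees $x^{-1}$ through $x^{-m}$ precisely when $\deg(a)<n-m$; this is the standard Laurent-series characterization of the congruence $\sum_j\tr_n(k_j)q_j\equiv a\pmod p$ with $\deg(a)<n-m$. I would state this congruence-versus-series equivalence carefully, as it is the only nontrivial algebraic content, and cite the analogous computation in \cite{DP07} where the dual polynomial lattice was introduced.

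Having shown $\Pcal^\perp=\Pcal^\perp(\bsq,p)$ as subsets of $\nat_0^s$ and $|\Pcal(\bsq,p)|=b^m$, the conclusion is a direct substitution into Lemma~\ref{lem:dual_Walsh}, completing the proof. The only points needing care are the bookkeeping on indices when passing between the digit-vector description of $C_j^\top\vec{k}_j$ and the Laurent-coefficient description, and confirming that the truncation of $k_j$ to $n$ digits is harmless because rows beyond $n$ of $C_j$ are zero; neither is deep, but both should be spelled out to make the identification airtight.
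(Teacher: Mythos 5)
Your proof is correct and follows exactly the route the paper implicitly relies on: the paper states Lemma~\ref{lem:dual_poly_Walsh} without proof (merely remarking that it ``replaces'' Lemma~\ref{lem:dual_Walsh}), tacitly invoking the standard identification of $\Pcal(\bsq,p)$ as a digital net with Hankel-type generating matrices built from the Laurent coefficients of $q_j/p$ and the resulting coincidence of the two notions of dual, which is the computation in \cite{DP07}. Your write-up supplies precisely that omitted argument, and the two nontrivial steps --- the $(r,i)$-entry $u^{(j)}_{r+i}$ of $C_j$ and the equivalence between vanishing of the $x^{-1},\dots,x^{-m}$ coefficients of $\sum_j \tr_n(k_j)q_j/p$ and the condition $\deg(a)<n-m$ --- are both carried out correctly.
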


\section{Folded digital nets and integration error}\label{sec:wc_error}

\subsection{The $b$-adic tent transformation}
Let $b$ be an arbitrary positive integer greater than $1$ again. Here we describe the $b$-adic tent transformation ($b$-TT)
$\Phi_b \colon \Zbinfty \to \Zbinfty$.
Previously, the authors introduced the $b$-adic tent transformation
$\phi_b \colon [0,1] \to [0,1]$ in \cite{GSYxx}.
These two $b$-adic tent transformations satisfy
$\pi \circ \Phi_b \circ \sigma = \phi_b$.
Notice that the original (dyadic) tent transformation is given by $\phi_2(x)=1-|2x-1|$, see \cite{H02}.

The $b$-TT, which is denoted by $\Phi_b$, is defined as follows.
Let $z=(\zeta_1, \zeta_2, \cdots)^{\top} \in \Zbinfty$.
Then $\Phi_b \colon \Zbinfty \to \Zbinfty$ is given by
  \begin{align*}
     \Phi_b(z) := (\eta_1, \eta_2, \dots)^{\top} \quad \text{with}\quad \eta_i=\zeta_{i+1}-\zeta_1 \pmod b .
  \end{align*}
For a vector $\bsz = (z_1, \dots, z_s) \in \Zbinfty^s$, we define $\Phi_b(\bsz) := (\Phi_b(z_1), \dots, \Phi_b(z_s))$. 
We can see that $\Phi_b$ is a $\ZZ_b$-module homomorphism as well as a group homomorphism.

\subsection{Folded digital nets}
In the following we consider folded digital nets in $\Zbinfty^s$, that is, digital nets in $\Zbinfty^s$ that are folded using the $b$-TT. We recall that quadrature points in $[0,1]^s$ used for QMC integration are obtained by the mapping $\pi$. For the sake of completeness, we give the definition of folded digital nets below.
\begin{definition}\label{def:folded_digital_net}
Let $\mathcal{P}$ be a digital net in $\Zbinfty^s$. The folded digital net of $\mathcal{P}$, denoted by $\mathcal{P}_{\Phi_b}$, is defined as
  \begin{align*}
     \mathcal{P}_{\Phi_b}:=\{\Phi_b(\bsz) \colon \bsz\in \Pcal\}.
  \end{align*}
\end{definition}

\begin{remark}\label{rem:folded_digital_net}
Let $\mathcal{P}$ be a digital net in $\Zbinfty^s$ with generating matrices $C_1,\dots,C_s\in \ZZ_b^{\nat\times m}$. In view of Definition~\ref{def:digital_net}, the folded digital net of $\mathcal{P}$ coincides with the digital net in $\Zbinfty^s$ with generating matrices $TC_1,\dots,TC_s\in \ZZ_b^{\nat\times m}$, where the matrix $T=(t_{i,j})_{i,j\in \nat}$ is defined as
  \begin{align*}
     t_{i,j}=\begin{cases}
     b-1 & \text{if $j=1$}, \\
     1  & \text{if $j=i+1$}, \\
     0  & \text{otherwise}.     
     \end{cases}
  \end{align*}
This fact has already been utilized to study the $L_p$ discrepancy of two-dimensional folded Hammersley point sets \cite{Gxx3}.
\end{remark}

We now consider the dual net of folded digital nets. In order to give the following lemma, we need to introduce some notation. For $x\in \RR$, we denote by $\lfloor x \rfloor$ the unique integer $n$ satisfying the inequalities $n\le x<n+1$. For $\bsx=(x_1,\ldots,x_s)\in \RR^s$, we write $\lfloor \bsx \rfloor=(\lfloor x_1 \rfloor,\ldots,\lfloor x_s \rfloor)$ for short. For $k\in \nat$ whose $b$-adic expansion is given by $k = \kappa_0+\kappa_1b+\cdots$, let $\delta(k):=\kappa_0+\kappa_1+\cdots$, which is called the $b$-adic sum-of-digits of $k$. Moreover, we define 
  \begin{align*}
     \Ecal:=\{k\in \nat \colon \delta(k)\equiv 0 \pmod b\},
  \end{align*}
and $\Ecal_0:=\Ecal\cup \{0\}$.

\begin{lemma}\label{lem:folded_2}
Let $\mathcal{P}$ be a digital net in $\Zbinfty^s$, and let $\mathcal{P}_{\Phi_b}$ be its folded digital net. Further, let $\mathcal{P}^{\perp}$ be the dual net of $\mathcal{P}$, and let $\mathcal{P}^{\perp}_{\Phi_b}$ be the dual net of $\mathcal{P}_{\Phi_b}$. Then we have
  \begin{align*}
     \mathcal{P}^{\perp}_{\Phi_b} =\{\lfloor \bsk/b\rfloor \colon \bsk\in \Ecal_0^s \cap \mathcal{P}^{\perp}\} ,
  \end{align*}
where we write $\lfloor \bsk/b\rfloor=(\lfloor k_1/b\rfloor,\ldots,\lfloor k_s/b\rfloor)$ for $\bsk=(k_1,\ldots,k_s)$.
\end{lemma}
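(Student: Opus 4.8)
The plan is to characterize $\mathcal{P}^{\perp}_{\Phi_b}$ through the generating matrices of the folded net. By Remark~\ref{rem:folded_digital_net}, $\mathcal{P}_{\Phi_b}$ is the digital net with generating matrices $TC_1,\dots,TC_s$. Hence, by Definition~\ref{def:dual_net}, a vector $\bsl=(l_1,\dots,l_s)\in\nat_0^s$ lies in $\mathcal{P}^{\perp}_{\Phi_b}$ precisely when $(TC_1)^{\top}\vec{l}_1\oplus\dots\oplus(TC_s)^{\top}\vec{l}_s=\bszero$, that is, $C_1^{\top}(T^{\top}\vec{l}_1)\oplus\dots\oplus C_s^{\top}(T^{\top}\vec{l}_s)=\bszero$. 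The key observation is that this says $(T^{\top}\vec{l}_1,\dots,T^{\top}\vec{l}_s)$ satisfies the membership condition for the \emph{unfolded} dual net $\mathcal{P}^{\perp}$. So the heart of the proof is to understand, digit by digit, what index $\bsk$ has $\vec{k}_j=T^{\top}\vec{l}_j$ for each $j$, and to show that the set of achievable $\bsk$ is exactly $\Ecal_0^s$, with $\bsl=\lfloor\bsk/b\rfloor$ being the inverse correspondence.

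First I would compute $T^{\top}$ explicitly and analyze the map $\vec{l}\mapsto T^{\top}\vec{l}$ on a single coordinate. Writing $\vec{l}=(\lambda_0,\lambda_1,\dots)^{\top}$ with $l=\lambda_0+\lambda_1 b+\cdots$, the transpose $T^{\top}$ has $(b-1)$'s in its first row and the identity shifted below, so one finds $T^{\top}\vec{l}=\big((b-1)(\lambda_0+\lambda_1+\cdots),\lambda_0,\lambda_1,\dots\big)^{\top}\bmod b$. Reading this as the digit vector $\vec{k}_j$ of some $k_j\in\nat_0$, the zeroth digit is $\kappa_{0,j}\equiv -\delta(l_j)\pmod b$ and the remaining digits reproduce those of $l_j$: thus $\kappa_{i,j}=\lambda_{i-1,j}$ for $i\ge 1$. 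Two consequences are immediate and central: the trailing digits of $k_j$ are exactly the digits of $l_j$, so $\lfloor k_j/b\rfloor=l_j$; and the sum of digits of $k_j$ is $\delta(k_j)=\kappa_{0,j}+\delta(l_j)\equiv 0\pmod b$, i.e. $k_j\in\Ecal_0$. Conversely, any $k_j\in\Ecal_0$ has its zeroth digit forced to be $-\delta(\lfloor k_j/b\rfloor)\bmod b$, which matches the formula, so the correspondence $k_j\leftrightarrow l_j=\lfloor k_j/b\rfloor$ is a bijection between $\Ecal_0$ and $\nat_0$.

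With this digit bijection in hand I would assemble the two inclusions. Given $\bsl\in\mathcal{P}^{\perp}_{\Phi_b}$, set $k_j$ to be the unique element of $\Ecal_0$ with $\lfloor k_j/b\rfloor=l_j$; then $\vec{k}_j=T^{\top}\vec{l}_j$, so the folded membership condition becomes $C_1^{\top}\vec{k}_1\oplus\dots\oplus C_s^{\top}\vec{k}_s=\bszero$, which is exactly $\bsk\in\mathcal{P}^{\perp}$; since also $\bsk\in\Ecal_0^s$, we get $\bsl=\lfloor\bsk/b\rfloor$ with $\bsk\in\Ecal_0^s\cap\mathcal{P}^{\perp}$. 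Running the argument backward gives the reverse inclusion, establishing equality of the two sets.

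The main obstacle will be the careful digit bookkeeping in establishing $T^{\top}\vec{l}=\vec{k}$, particularly keeping the indexing of the infinite matrix $T^{\top}$ consistent with the $b$-adic expansions and ensuring the reductions modulo $b$ are handled uniformly across all digits; one must also confirm that $T^{\top}$ maps the finitely-supported $\vec{l}$ to a finitely-supported $\vec{k}$, so that $k_j$ genuinely corresponds to an element of $\nat_0$. This is routine but is where a sign error or an off-by-one in the shift would silently break the correspondence, so I would verify the formula for $T^{\top}\vec{l}$ against small cases before proceeding.
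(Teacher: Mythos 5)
Your proposal is correct and follows essentially the same route as the paper's proof: invoke Remark~\ref{rem:folded_digital_net} to write the folded dual condition via $T^{\top}$ applied to the digit vectors, compute that $T^{\top}$ prepends the digit $-\delta(\cdot)\bmod b$ while shifting the rest, and deduce the bijection between $\Ecal_0$ and $\nat_0$ given by $k\mapsto\lfloor k/b\rfloor$. The only difference is cosmetic (your naming of $\bsk$ and $\bsl$ is swapped relative to the paper's internal computation), and your explicit attention to finite support of $T^{\top}\vec{l}$ is a reasonable extra check.
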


\begin{proof}
Let $\mathcal{P}$ be a digital net in $\Zbinfty^s$ with generating matrices $C_1,\ldots,C_s$. From Definition \ref{def:dual_net} and Remark~\ref{rem:folded_digital_net}, the dual net $\mathcal{P}^{\perp}_{\Phi_b}$ is given by
  \begin{align*}
     \mathcal{P}^{\perp}_{\Phi_b} & = \{\bsk\in \nat_0^s \colon (TC_1)^{\top}\vec{k}_1 \oplus \dots \oplus (TC_s)^{\top}\vec{k}_s = \bszero \in \ZZ_b^m \} \\
     & = \{\bsk\in \nat_0^s \colon C_1^{\top}T^{\top}\vec{k}_1 \oplus \dots \oplus C_s^{\top}T^{\top}\vec{k}_s = \bszero \in \ZZ_b^m \} .
  \end{align*}
For $1\le j\le s$, we have
  \begin{align*}
     T^{\top}\vec{k}_j = \left(
    \begin{array}{cccc}
      b-1 & b-1 & b-1 & \cdots \\
      1 & 0 & 0 & \cdots \\
      0 & 1 & 0 & \cdots \\
      0 & 0 & 1 & \cdots \\
      \vdots & \vdots & \vdots & \ddots 
    \end{array}\right) \left(
    \begin{array}{c}
      \kappa_{0,j} \\
      \kappa_{1,j} \\
      \kappa_{2,j} \\
      \kappa_{3,j} \\
      \vdots
    \end{array}\right) = \left(
    \begin{array}{c}
      -(\kappa_{0,j}+\kappa_{1,j}+\cdots) \\
      \kappa_{0,j} \\
      \kappa_{1,j} \\
      \kappa_{2,j} \\
      \vdots
    \end{array}\right) .
  \end{align*}
We now define $\beta:\nat_0\to \{0,\ldots,b-1\}$ by $\beta(k)=-\delta(k) \pmod b$ for $k\in \nat_0$. Then we have
\begin{align*}
     T^{\top}\vec{k}_j = \vec{l}_j \in \Zbinfty ,
  \end{align*}
where $l_j$ is given by 
  \begin{align*}
     l_j & = \beta(k_j)+\kappa_{0,j}b+\kappa_{1,j}b^2+\cdots \\
     & = \beta(k_j)+bk_j .
  \end{align*}
Using this notation, $\mathcal{P}^{\perp}_{\Phi_b}$ is given by
  \begin{align}\label{eq:fold_dual_proof}
     \mathcal{P}^{\perp}_{\Phi_b} & = \{\bsk\in \nat_0^s \colon C_1^{\top}\vec{l}_1 \oplus \dots \oplus C_s^{\top}\vec{l}_s = \bszero \in \ZZ_b^m \} \nonumber \\
     & = \{\bsk\in \nat_0^s \colon (l_1,\ldots,l_s) \in \mathcal{P}^{\perp} \} ,
  \end{align}
where the second equality stems from the definition of the dual net of $\mathcal{P}$, see Definition \ref{def:dual_net}. Here we see that 
  \begin{align*}
     \delta(l_j)\equiv -\delta(k_j)+\kappa_0+\kappa_1+\cdots \equiv -\delta(k_j)+\delta(k_j) \equiv 0 \pmod b .
  \end{align*}
This implies that $(l_1,\ldots,l_s)\in \Ecal_0^s$. Let us consider the equation $\delta(k'_j+bk_j)\equiv 0 \pmod b$ with a variable $k'_j\in \{0,\ldots,b-1\}$. It is easy to confirm that $k'_j=\beta(k_j)$ is the only solution of this equation. Moreover, since we have $\lfloor (k'_j+bk_j)/b \rfloor=k_j$ independently of the choice $k'_j\in \{0,\ldots,b-1\}$, $k_j$ in (\ref{eq:fold_dual_proof}) can be uniquely expressed as $\lfloor l_j/b\rfloor$. Thus we have
  \begin{align*}
     \mathcal{P}^{\perp}_{\Phi_b} = & \{\lfloor \bsl/b\rfloor \colon \bsl\in \Ecal_0^s \cap \mathcal{P}^{\perp} \} ,
  \end{align*}
which completes the proof.
\end{proof}

\subsection{Worst-case error in reproducing kernel Hilbert spaces}
Let us consider a reproducing kernel Hilbert space $\Hcal$ with reproducing kernel $\Kcal:[0,1]^s\times [0,1]^s\to \RR$. The inner product in $\Hcal$ is denoted by $\langle f,g \rangle_{\Hcal}$ for $f,g\in \Hcal$ and the associated norm is denoted by $\lVert f\rVert_{\Hcal}:=\sqrt{\langle f,f \rangle_{\Hcal}}$.

It is known that if a reproducing kernel $\Kcal$ satisfies $\int_{[0,1]^s}\sqrt{\Kcal(\bsx,\bsx)}\rd \bsx<\infty$ the squared worst-case error in the space $\Hcal$ of the QMC rule using a point set $P\subset [0,1]^s$ is given by
  \begin{align}\label{eq:worst-case_error}
     & \quad e^2(P,\Kcal) \nonumber \\
     & := \left( \sup_{\substack{f\in \Hcal \\ \|f\|_{\Hcal}\le 1}}|I(f)-Q(f;P)|\right)^2 \nonumber \\ 
     & = \int_{[0,1]^{2s}}\Kcal(\bsx,\bsy)\rd \bsx \rd \bsy-\frac{2}{|P|}\sum_{\bsx\in P}\int_{[0,1]^s}\Kcal(\bsx,\bsy)\rd \bsy+\frac{1}{|P|^2}\sum_{\bsx,\bsy\in P}\Kcal(\bsx,\bsy) ,
  \end{align}
while the squared initial error is given by
  \begin{align*}
     e^2(\emptyset,\Kcal) := \left( \sup_{\substack{f\in \Hcal\\ \|f\|_{\Hcal}\le 1}}|I(f)|\right)^2 = \int_{[0,1]^{2s}}\Kcal(\bsx,\bsy)\rd \bsx \rd \bsy .
  \end{align*}
For $\bsk, \bsl \in \nat_0^s$, the $(\bsk,\bsl)$-th Walsh coefficient of $\Kcal$ is given by
  \begin{align*}
     \hat{\Kcal}(\bsk,\bsl) := \int_{[0,1]^{2s}} \Kcal(\bsx,\bsy)\overline{\wal_{\bsk}(\bsx)}\wal_{\bsl}(\bsy)\rd \bsx \rd \bsy.
\end{align*}
We refer to \cite[Chapter~2]{DP10} for details. In the following we always assume $\int_{[0,1]^s}\sqrt{\Kcal(\bsx,\bsx)}\rd \bsx<\infty$ and consider the squared worst-case error of QMC rules using digital nets in $[0,1]^s$ in the space $\Hcal$.

\begin{proposition}\label{prop:worst_case_digital_net}
Let $\mathcal{P}, \mathcal{P}^{\perp}$ be a digital net in $\Zbinfty^s$ and its dual net, respectively, and let $\Kcal$ be a continuous reproducing kernel which satisfies $\int_{[0,1]^s}\sqrt{\Kcal(\bsx,\bsx)}\rd \bsx<\infty$.
We assume that
     $\sum_{\bsk,\bsl\in \nat_0^s}|\hat{\Kcal}(\bsk,\bsl)|<\infty$.
Then the squared worst-case error of QMC rules using $\pi(\mathcal{P})$ is given by
  \begin{align*}
     e^2(\pi(\mathcal{P}),\Kcal) = \sum_{\bsk,\bsl\in \mathcal{P}^{\perp}\setminus \{\bszero\}}\hat{\Kcal}(\bsk,\bsl) .
  \end{align*}
\end{proposition}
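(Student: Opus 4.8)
The plan is to expand the kernel in its Walsh series and substitute it into the worst-case error identity (\ref{eq:worst-case_error}), so that the entire computation reduces to two elementary evaluations. By hypothesis $\sum_{\bsk,\bsl\in\nat_0^s}|\hat{\Kcal}(\bsk,\bsl)|<\infty$, and since $|\wal_{\bsk}|\equiv 1$ the series $\sum_{\bsk,\bsl\in\nat_0^s}\hat{\Kcal}(\bsk,\bsl)\wal_{\bsk}(\bsx)\overline{\wal_{\bsl}(\bsy)}$ converges absolutely and uniformly on $[0,1]^{2s}$; its limit is continuous, and by termwise integration together with the orthonormality in Item~2 of Proposition~\ref{prop:walsh} its $(\bsk,\bsl)$-th Walsh coefficient is $\hat{\Kcal}(\bsk,\bsl)$. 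As $\Kcal$ is continuous with the same coefficients and the Walsh system is complete (Item~3 of Proposition~\ref{prop:walsh}), the series equals $\Kcal$ pointwise. Uniform convergence then licenses interchanging the summation over $(\bsk,\bsl)$ with the integrals over $[0,1]^s$, $[0,1]^{2s}$ and with the finite sums over $P=\pi(\mathcal{P})$ that appear in (\ref{eq:worst-case_error}).

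Next I would record the two inputs to which all the terms of (\ref{eq:worst-case_error}) reduce. Item~1 of Proposition~\ref{prop:walsh} gives $\int_{[0,1]^s}\wal_{\bsk}(\bsx)\rd\bsx=1$ if $\bsk=\bszero$ and $0$ otherwise. For the point-set average, the bridge identity (\ref{eq:WandWalsh}) yields $\wal_{\bsk}(\pi(\bsz))=W_{\bsk}(\bsz)$, whence
\[
\frac{1}{|P|}\sum_{\bsx\in P}\wal_{\bsk}(\bsx)=\frac{1}{|\mathcal{P}|}\sum_{\bsz\in\mathcal{P}}W_{\bsk}(\bsz),
\]
which by Lemma~\ref{lem:dual_Walsh} equals $1$ if $\bsk\in\mathcal{P}^{\perp}$ and $0$ otherwise. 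Taking complex conjugates gives the same (real-valued) evaluations for $\overline{\wal_{\bsl}}$.

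With these in hand the bookkeeping is cleanest if I note that each term of (\ref{eq:worst-case_error}) is built from the single factor
\[
a_{\bsk}:=\int_{[0,1]^s}\wal_{\bsk}(\bsx)\rd\bsx-\frac{1}{|P|}\sum_{\bsx\in P}\wal_{\bsk}(\bsx)
\]
and its conjugate: using the symmetry $\Kcal(\bsx,\bsy)=\Kcal(\bsy,\bsx)$ to split the factor $2$ in the middle term into an $\bsx$- and a $\bsy$-contribution, the three terms assemble into the factored expression $\sum_{\bsk,\bsl\in\nat_0^s}\hat{\Kcal}(\bsk,\bsl)\,a_{\bsk}\,\overline{a_{\bsl}}$. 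Because $\bszero\in\mathcal{P}^{\perp}$ always, the evaluations above give $a_{\bsk}=-1$ when $\bsk\in\mathcal{P}^{\perp}\setminus\{\bszero\}$ and $a_{\bsk}=0$ otherwise, and likewise for $\overline{a_{\bsl}}$. Hence only pairs with $\bsk,\bsl\in\mathcal{P}^{\perp}\setminus\{\bszero\}$ survive, each with coefficient $(-1)(-1)=1$, and the double sum collapses to $\sum_{\bsk,\bsl\in\mathcal{P}^{\perp}\setminus\{\bszero\}}\hat{\Kcal}(\bsk,\bsl)$, as claimed.

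The main obstacle is analytic rather than algebraic: one must justify that the Walsh series of $\Kcal$ converges to $\Kcal$ not merely in $L^2$ but pointwise (and uniformly), and that termwise interchange with the integrals and with the point-set sums is legitimate. Both follow from the absolute summability hypothesis together with $|\wal_{\bsk}|\equiv 1$ and completeness, as sketched above. A second, minor point to verify is the boundary behaviour of the bridge $\wal_{\bsk}(\pi(\bsz))=W_{\bsk}(\bsz)$: since (\ref{eq:WandWalsh}) only gives $\wal_{\bsk}(\pi(\bsz))=W_{\bsk}(\sigma(\pi(\bsz)))$ and $\sigma\circ\pi$ need not be the identity on $\Zbinfty^s$, one should confirm the two agree at the net points $\bsz\in\mathcal{P}$, which holds whenever no coordinate of $\bsz$ has an all-$(b-1)$ tail, in particular for classical finite-column digital nets.
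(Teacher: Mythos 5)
There is a genuine gap, and it sits exactly at the point you dismiss as "minor" at the end. Your evaluation of the point-set averages rests on the bridge $\wal_{\bsk}(\pi(\bsz))=W_{\bsk}(\bsz)$, i.e.\ on $\tfrac{1}{|P|}\sum_{\bsx\in P}\wal_{\bsk}(\bsx)=\tfrac{1}{|\mathcal{P}|}\sum_{\bsz\in\mathcal{P}}W_{\bsk}(\bsz)$, so that Lemma~\ref{lem:dual_Walsh} applies. But (\ref{eq:WandWalsh}) only gives $\wal_{\bsk}(\pi(\bsz))=W_{\bsk}(\sigma(\pi(\bsz)))$, and for digital nets in $\Zbinfty^s$ as defined in Definition~\ref{def:digital_net} the generating matrices may have infinite columns, so net points genuinely can have coordinates with an all-$(b-1)$ tail, where $\sigma\circ\pi\neq\mathrm{id}$ and $W_{\bsk}(\sigma(\pi(\bsz)))\neq W_{\bsk}(\bsz)$ (e.g.\ $z=(0,b-1,b-1,\dots)^{\top}$ gives $W_1(z)=1$ but $\wal_1(\pi(z))=\wal_1(1/b)=\omega_b$). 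The set $\sigma(\pi(\mathcal{P}))$ is then not a subgroup of $\Zbinfty^s$, so Lemma~\ref{lem:dual_Walsh} cannot be applied to the sum $\sum_{\bsz\in\mathcal{P}}W_{\bsk}(\sigma(\pi(\bsz)))$. This is not an edge case one can wave away: the proposition is later applied to folded nets $\mathcal{P}_{\Phi_b}$, whose generating matrices $TC_j$ (Remark~\ref{rem:folded_digital_net}) have columns with eventually constant nonzero entries, so their points are precisely of the problematic type. The paper avoids this by never passing through $\wal_{\bsk}(\pi(\bsz))$: it proves the expansion (\ref{eq:Wexpansion}), $\Kcal(\pi(\bsz),\pi(\bsw))=\sum_{\bsk,\bsl}\hat{\Kcal}(\bsk,\bsl)W_{\bsk}(\bsz)\overline{W_{\bsl}(\bsw)}$, valid for \emph{all} $\bsz,\bsw\in\Zbinfty^{s}$, and then sums $W_{\bsk}$ over $\mathcal{P}$ itself, to which Lemma~\ref{lem:dual_Walsh} legitimately applies.

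A second, smaller flaw is your justification of pointwise convergence of the Walsh series to $\Kcal$: a uniform limit of Walsh polynomials is \emph{not} continuous in general (the $\wal_{\bsk}$ are piecewise constant with jumps at $b$-adic rationals), so "absolute convergence $+$ same coefficients $+$ completeness $+$ continuity of the limit" does not close. The correct argument, and the one the paper uses, identifies the $b^n$-th partial sums as $b^{2sn}\int_{H(\bsz,\bsw,n)}\Kcal\circ\pi$ via Paley's lemma (Lemma~\ref{lem:Paley}) and invokes continuity of $\Kcal\circ\pi$ on the compact group $\Zbinfty^{2s}$, where the characters $W_{\bsk}$ \emph{are} continuous. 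Your final factorization $\sum_{\bsk,\bsl}\hat{\Kcal}(\bsk,\bsl)a_{\bsk}\overline{a_{\bsl}}$ is a clean and correct way to organize the algebra and is essentially equivalent to the paper's term-by-term computation, but the two analytic inputs feeding it need the $\Zbinfty$-side treatment for the statement to hold in the generality claimed.
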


\begin{proof}
First we prove that for any $\bsz, \bsw \in \Zbinfty^s$ it holds that
\begin{align}\label{eq:Wexpansion}
\Kcal(\pi(\bsz), \pi(\bsw)) = \sum_{\bsk,\bsl\in \nat_0^s}\hat{\Kcal}(\bsk,\bsl)
W_{\bsk}(\bsz)\overline{W_{\bsl}(\bsw)}.
\end{align}
The proof of (\ref{eq:Wexpansion}) is similar to \cite[Proposition~A.20]{DP10}.
By the assumption that $\sum_{\bsk,\bsl\in \nat_0^s}|\hat{\Kcal}(\bsk,\bsl)|<\infty$,
$\sum_{\bsk,\bsl\in \nat_0^s}\hat{\Kcal}(\bsk,\bsl)
W_{\bsk}(\bsz)\overline{W_{\bsl}(\bsw)}$
converges absolutely. Therefore it suffices to show
that
$$
\lim_{n \to \infty} \sum_{\bsk,\bsl < b^n}\hat{\Kcal}(\bsk,\bsl)
W_{\bsk}(\bsz)\overline{W_{\bsl}(\bsw)}
= \Kcal(\pi(\bsz), \pi(\bsw)),
$$
where $\bsk = (k_1, \dots, k_s) < b^n$ means that $k_j < b^n$ holds
for every $j$.
Define two sets $H_n \subset \Zbinfty$ and $H(\bsz, \bsw, n) \subset \Zbinfty^{2s}$ as
$H_n := \{(\zeta'_1, \zeta'_2, \dots)^{\top} \in \Zbinfty \colon \zeta'_1 = \cdots = \zeta'_n = 0\}$
and
$H(\bsz, \bsw, n) := \{(\bsz', \bsw') \in \Zbinfty^{2s}
\colon \bsz \ominus \bsz' \in H_n^s, \bsw \ominus \bsw' \in H_n^s\}$.
Then we have
\begin{align*}
& \quad \sum_{\bsk,\bsl < b^n}\hat{\Kcal}(\bsk,\bsl) W_{\bsk}(\bsz)\overline{W_{\bsl}(\bsw)} \\
& =\sum_{\bsk,\bsl < b^n} W_{\bsk}(\bsz)\overline{W_{\bsl}(\bsw)}
\int_{[0,1]^{2s}} \Kcal(\bsx,\bsy)\overline{\wal_{\bsk}(\bsx)}\wal_{\bsl}(\bsy)\rd \bsx \rd \bsy \\
&=\sum_{\bsk,\bsl < b^n} W_{\bsk}(\bsz)\overline{W_{\bsl}(\bsw)}
\int_{[0,1]^{2s}} (\Kcal \circ \pi \circ \sigma)(\bsx,\bsy)\overline{W_{\bsk}(\sigma(\bsx))}W_{\bsl}(\sigma(\bsy))\rd \bsx \rd \bsy \\
&=\sum_{\bsk,\bsl < b^n} W_{\bsk}(\bsz)\overline{W_{\bsl}(\bsw)}
\int_{\Zbinfty^{2s}} (\Kcal \circ \pi) (\bsz',\bsw')\overline{W_{\bsk}(\bsz')}W_{\bsl}(\bsw') \rd \bsz' \rd \bsw' \\
&= \int_{\Zbinfty^{2s}} \Kcal(\pi(\bsz'), \pi(\bsw'))
\sum_{\bsk,\bsl < b^n} \overline{W_{\bsk}(\bsz')}W_{\bsl}(\bsw')W_{\bsk}(\bsz)\overline{W_{\bsl}(\bsw)} \rd \bsz' \rd \bsw'  \\
&= \int_{\Zbinfty^{2s}} \Kcal(\pi(\bsz'), \pi(\bsw'))
\sum_{\bsk < b^n}W_{\bsk}(\bsz \ominus \bsz')\sum_{\bsl < b^n}\overline{W_{\bsl}(\bsw \ominus \bsw')} \rd \bsz' \rd \bsw'  \\
&= b^{2sn}\int_{H(\bsz, \bsw, n)} \Kcal(\pi(\bsz'), \pi(\bsw'))
\rd \bsz' \rd \bsw',
\end{align*}
where we use Item~2 of Proposition~\ref{prop: map pi and rho}
and (\ref{eq:WandWalsh}),
Item~3 of Proposition~\ref{prop: map pi and rho} and Lemma~\ref{lem:Paley}
in the second, third and the last equality, respectively.
Since $\Kcal$ and $\pi$ are continuous, $\Kcal \circ \pi$ is also continuous.
Hence the last term of the above equality converges to
$\Kcal(\pi(\bsz), \pi(\bsw))$
as $n \to \infty$. 
This proves (\ref{eq:Wexpansion}).

Now we prove Proposition~\ref{prop:worst_case_digital_net}.
For the first term on the right-hand side of (\ref{eq:worst-case_error}), we have
  \begin{align*}
     \int_{[0,1]^{2s}}\Kcal(\bsx,\bsy)\rd \bsx \rd \bsy = \hat{\Kcal}(\bszero,\bszero) .
  \end{align*}
For the second term on the right-hand side of (\ref{eq:worst-case_error}), we have
\begin{align*}
     & \quad \frac{2}{|\mathcal{P}|}\sum_{\bsx\in \pi(\mathcal{P})}\int_{[0,1]^s}\Kcal(\bsx,\bsy)\rd \bsy \\
     & = \frac{1}{|\mathcal{P}|}\sum_{\bsx\in \pi(\mathcal{P})}\int_{[0,1]^s}\Kcal(\bsx,\bsy)\rd \bsy +\frac{1}{|\mathcal{P}|}\sum_{\bsx\in \pi(\mathcal{P})}\int_{[0,1]^s}\Kcal(\bsy,\bsx)\rd \bsy\\
     & = \frac{1}{|\mathcal{P}|}\sum_{\bsz\in \mathcal{P}}\int_{\Zbinfty^s}\Kcal(\pi(\bsz),\pi(\bsw))\rd \bsw +\frac{1}{|\mathcal{P}|}\sum_{\bsz\in \mathcal{P}}\int_{\Zbinfty^s}\Kcal(\pi(\bsw),\pi(\bsz))\rd \bsw\\
     & = \frac{1}{|\mathcal{P}|}\sum_{\bsz\in \mathcal{P}}\sum_{\bsk,\bsl\in \nat_0^s}\hat{\Kcal}(\bsk,\bsl)W_{\bsk}(\bsz)\int_{\Zbinfty^s}\overline{W_{\bsl}(\bsw)}\rd \bsw \\
     & \quad + \frac{1}{|\mathcal{P}|}\sum_{\bsz\in \mathcal{P}}\sum_{\bsk,\bsl\in \nat_0^s}\hat{\Kcal}(\bsk,\bsl)\overline{W_{\bsl}(\bsz)}\int_{\Zbinfty^s}W_{\bsk}(\bsw)\rd \bsw \\
     & = \sum_{\bsk\in \nat_0^s}\hat{\Kcal}(\bsk,\bszero) \frac{1}{|\mathcal{P}|}\sum_{\bsz\in \mathcal{P}}W_{\bsk}(\bsz)+\sum_{\bsl\in \nat_0^s}\hat{\Kcal}(\bszero,\bsl) \overline{\frac{1}{|\mathcal{P}|}\sum_{\bsz\in \mathcal{P}}W_{\bsl}(\bsz)} \\
     & = \sum_{\bsk\in \mathcal{P}^{\perp}}\hat{\Kcal}(\bsk,\bszero)+\sum_{\bsl\in \mathcal{P}^{\perp}}\hat{\Kcal}(\bszero,\bsl) ,
  \end{align*}
where we use the symmetry of $\Kcal$, Item~4 of Proposition~\ref{prop: map pi and rho}, (\ref{eq:Wexpansion}), Item~1 of Proposition~\ref{prop: infinite digit} and Lemma \ref{lem:dual_Walsh} in the first, second, third, fourth and fifth equalities, respectively. For the last term on the right-hand side of (\ref{eq:worst-case_error}), we have
  \begin{align*}
     \frac{1}{|\mathcal{P}|^2}\sum_{\bsx,\bsy\in \pi(\mathcal{P})}\Kcal(\bsx,\bsy)
& = \frac{1}{|\mathcal{P}|^2}\sum_{\bsz,\bsw\in \mathcal{P}}\Kcal(\pi(\bsz),\pi(\bsw))\\
& = \frac{1}{|\mathcal{P}|^2}\sum_{\bsz,\bsw\in \mathcal{P}}\sum_{\bsk,\bsl\in \nat_0^s}\hat{\Kcal}(\bsk,\bsl)W_{\bsk}(\bsz)\overline{W_{\bsl}(\bsw)} \\
& = \sum_{\bsk,\bsl\in \nat_0^s}\hat{\Kcal}(\bsk,\bsl)\frac{1}{|\mathcal{P}|}\sum_{\bsz\in \mathcal{P}}W_{\bsk}(\bsz)\overline{\frac{1}{|\mathcal{P}|}\sum_{\bsw\in \mathcal{P}}W_{\bsl}(\bsw)} \\
& = \sum_{\bsk,\bsl\in \mathcal{P}^{\perp}}\hat{\Kcal}(\bsk,\bsl) ,
  \end{align*}
where we use (\ref{eq:Wexpansion}) and Lemma~\ref{lem:dual_Walsh} again in the second  and forth equality, respectively. Substituting these results into the right-hand side of (\ref{eq:worst-case_error}), the result follows.
\end{proof}

As mentioned in Remark~\ref{rem:folded_digital_net}, when $\mathcal{P}$ is a digital net in $\Zbinfty^s$, its folded digital net $\mathcal{P}_{\Phi_b}$ is also a digital net in $\Zbinfty^s$. Therefore, Proposition \ref{prop:worst_case_digital_net} can be applied to QMC rules using folded digital nets. We have the following results. 

\begin{proposition}\label{prop:worst_case_folded_digital_net}
Let $\mathcal{P}, \mathcal{P}_{\Phi_b}$ be a digital net in $\Zbinfty^s$ and its folded digital net, respectively, and let $\mathcal{P}^{\perp}$ be the dual net of $\mathcal{P}$. Further, let $\Kcal$ be a continuous reproducing kernel which satisfies $\int_{[0,1]^s}\sqrt{\Kcal(\bsx,\bsx)}\rd \bsx<\infty$. Assume that $\sum_{\bsk,\bsl\in \nat_0^s}|\hat{\Kcal}(\bsk,\bsl)|<\infty$. Then the squared worst-case error of QMC rules using $\pi(\mathcal{P}_{\Phi_b})$ is given by
  \begin{align*}
     e^2(\pi(\mathcal{P}_{\Phi_b}),\Kcal) = \sum_{\bsk,\bsl\in \Ecal_0^s \cap \mathcal{P}^{\perp}\setminus \{\bszero\}}\hat{\Kcal}(\lfloor \bsk/b\rfloor,\lfloor \bsl/b\rfloor) .
  \end{align*}
\end{proposition}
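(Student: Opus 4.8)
The plan is to reduce the claim to Proposition~\ref{prop:worst_case_digital_net} applied to the folded net itself, together with the dual-net description from Lemma~\ref{lem:folded_2}. By Remark~\ref{rem:folded_digital_net}, the folded digital net $\mathcal{P}_{\Phi_b}$ is again a digital net in $\Zbinfty^s$ (with generating matrices $TC_1,\dots,TC_s$), and all hypotheses of Proposition~\ref{prop:worst_case_digital_net} — continuity of $\Kcal$, the bound $\int_{[0,1]^s}\sqrt{\Kcal(\bsx,\bsx)}\rd\bsx<\infty$, and $\sum_{\bsk,\bsl\in\nat_0^s}|\hat{\Kcal}(\bsk,\bsl)|<\infty$ — are assumed here verbatim. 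Hence I would first apply that proposition directly to $\mathcal{P}_{\Phi_b}$ to obtain
\[
e^2(\pi(\mathcal{P}_{\Phi_b}),\Kcal)=\sum_{\bsk,\bsl\in \mathcal{P}^{\perp}_{\Phi_b}\setminus\{\bszero\}}\hat{\Kcal}(\bsk,\bsl),
\]
where $\mathcal{P}^{\perp}_{\Phi_b}$ is the dual net of the folded net.

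Next I would rewrite this sum using Lemma~\ref{lem:folded_2}, which identifies $\mathcal{P}^{\perp}_{\Phi_b}=\{\lfloor\bsk/b\rfloor\colon \bsk\in\Ecal_0^s\cap\mathcal{P}^{\perp}\}$. The proof of that lemma in fact exhibits the map $\bsk\mapsto\lfloor\bsk/b\rfloor$ as a \emph{bijection} from $\Ecal_0^s\cap\mathcal{P}^{\perp}$ onto $\mathcal{P}^{\perp}_{\Phi_b}$, its inverse being $\bsk\mapsto(\beta(k_1)+bk_1,\dots,\beta(k_s)+bk_s)$ with $\beta$ as defined there. I would therefore reparametrize each of the two summation indices independently through this bijection, replacing $\hat{\Kcal}(\bsk,\bsl)$ by $\hat{\Kcal}(\lfloor\bsk/b\rfloor,\lfloor\bsl/b\rfloor)$ and letting $\bsk,\bsl$ range over $\Ecal_0^s\cap\mathcal{P}^{\perp}$.

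The one genuinely delicate point — the step I expect to require the most care — is matching the excluded element $\bszero$ on the two sides. The bijection clearly sends $\bszero$ to $\bszero$, but I must also check that no other element of $\Ecal_0^s\cap\mathcal{P}^{\perp}$ maps to $\bszero$; equivalently, that $\bsk\in\Ecal_0^s$ with $\lfloor\bsk/b\rfloor=\bszero$ forces $\bsk=\bszero$. This holds because $\lfloor\bsk/b\rfloor=\bszero$ means $0\le k_j<b$ for every $j$, so $\delta(k_j)=k_j$; the membership condition $k_j\in\Ecal_0$ then reads $k_j\equiv 0\pmod b$, which for $0\le k_j<b$ leaves only $k_j=0$. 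Consequently the restriction of the bijection to $(\Ecal_0^s\cap\mathcal{P}^{\perp})\setminus\{\bszero\}$ is a bijection onto $\mathcal{P}^{\perp}_{\Phi_b}\setminus\{\bszero\}$, so the exclusion of the origin is consistent between the two sums, and substituting into the displayed formula yields exactly the stated expression for $e^2(\pi(\mathcal{P}_{\Phi_b}),\Kcal)$.
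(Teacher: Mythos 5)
Your proposal is correct and follows essentially the same route as the paper: apply Proposition~\ref{prop:worst_case_digital_net} to the folded net $\mathcal{P}_{\Phi_b}$ (a digital net by Remark~\ref{rem:folded_digital_net}) and then rewrite the index set via Lemma~\ref{lem:folded_2}. The paper states this in two sentences and leaves the bijectivity of $\bsk\mapsto\lfloor\bsk/b\rfloor$ and the matching of the excluded $\bszero$ implicit, whereas you verify both explicitly; these checks are correct and only make the argument more complete.
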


\begin{proof}
Since $\mathcal{P}_{\Phi_b}$ is a digital net in $\Zbinfty^s$, we have from Proposition \ref{prop:worst_case_digital_net}
  \begin{align*}
     e^2(\pi(\mathcal{P}_{\Phi_b}),\Kcal) = \sum_{\bsk,\bsl\in \mathcal{P}_{\Phi_b}^{\perp}\setminus \{\bszero\}}\hat{\Kcal}(\bsk,\bsl) ,
  \end{align*}
where $\mathcal{P}_{\Phi_b}^{\perp}$ is the dual net of $\mathcal{P}_{\Phi_b}$. Applying Lemma \ref{lem:folded_2} to the right-hand side, we have the result.
\end{proof}

\section{Bound on the worst-case error in $\Hcal_{\alpha}$}\label{sec:bound}
\subsection{Unanchored Sobolev spaces $\Hcal_{\alpha}$}
First we follow the expositions of \cite{BD09,DP07} to introduce the reproducing kernel Hilbert space $\Hcal_{\alpha}$ that we consider in the remainder of this paper.

Let $\alpha$ be a positive integer greater than 1. For the one-dimensional unweighted case, the inner product of $\Hcal_{\alpha}$ is given by
  \begin{align*}
    \langle f,g\rangle_{\Hcal_{\alpha}} = \sum_{\tau=0}^{\alpha-1}\int_{0}^{1}f^{(\tau)}(x)\rd x\int_{0}^{1}g^{(\tau)}(x)\rd x + \int_{0}^{1}f^{(\alpha)}(x)g^{(\alpha)}(x)\rd x ,
  \end{align*}
where $f^{(\tau)}$ denotes the $\tau$-th derivative of $f$ and $f^{(0)}=f$. The reproducing kernel is $1+ \Kcal_{\alpha,(1)}(\cdot,\cdot)$ for the function $\Kcal_{\alpha,(1)}: [0,1]\times [0,1]\to \RR$ defined as
  \begin{align*}
    \Kcal_{\alpha,(1)}(x,y) := \sum_{\tau=1}^{\alpha}\frac{\Bcal_{\tau}(x)\Bcal_{\tau}(y)}{(\tau !)^2}+(-1)^{\alpha+1}\frac{\Bcal_{2\alpha}(|x-y|)}{(2\alpha)!} ,
  \end{align*}
for $x,y\in [0,1]$, where $\Bcal_{\tau}$ denotes the Bernoulli polynomial of degree $\tau$.

Let us consider the $s$-dimensional weighted case. Let $\gamma_1,\ldots,\gamma_s$ be non-negative real numbers called \emph{weights}, which moderate the relative importance of different variables in the space $\Hcal_{\alpha}$. For a vector $(\alpha_1,\dots,\alpha_s)\in \nat_0^s$ with $0\le \alpha_j\le \alpha$ for $1\le j\le s$, we denote by $f^{(\alpha_1,\dots,\alpha_s)}$ the partial mixed derivative of $f$ of $(\alpha_1,\dots,\alpha_s)$-th order. Now the inner product of $\Hcal_{\alpha}$ is given by
  \begin{align*}
    \langle f,g\rangle_{\Hcal_{\alpha}} & = \sum_{u\subseteq \{1,\ldots,s\}}\left(\prod_{j\in u}\gamma_j^{-1}\right)\sum_{v\subseteq u}\sum_{\bstau_{u\setminus v}\in \{1,\ldots, \alpha-1\}^{|u\setminus v|}}\\
    & \quad \int_{[0,1]^{|v|}} \left(\int_{[0,1]^{s-|v|}}f^{(\bstau_{u\setminus v},\bsalpha_v,\bszero)}(\bsx)\rd\bsx_{-v}\right) \\
    & \quad \times \left(\int_{[0,1]^{s-|v|}}g^{(\bstau_{u\setminus v},\bsalpha_v,\bszero)}(\bsx)\rd\bsx_{-v}\right)\rd\bsx_{v} ,
  \end{align*}
where we use the following notation: For $\bstau_{u\setminus v}=(\tau_j)_{j\in u\setminus v}$, we denote by $(\bstau_{u\setminus v},\bsalpha_v,\bszero)$ the vector in which the $j$-th component is $\tau_j$ for $j\in u\setminus v$, $\alpha$ for $j\in v$, and 0 for $\{1,\ldots,s\}\setminus u$. For $v\subseteq \{1,\ldots,s\}$, we simply write $-v:=\{1,\ldots,s\}\setminus v$, $\bsx_{v}=(x_j)_{j\in v}$ and $\bsx_{-v}=(x_j)_{j\in -v}$. Further for $u\subseteq \{1,\ldots,s\}$ such that $\gamma_j=0$ for some $j\in u$, we assume that the corresponding inner double sum equals 0 and we set $0/0=0$. The reproducing kernel is given by
  \begin{align*}
    \Kcal_{\alpha}(\bsx,\bsy) = \prod_{j=1}^{s}\left( 1+\gamma_j \Kcal_{\alpha,(1)}(x_j,y_j)\right) ,
  \end{align*}
for $\bsx=(x_1,\ldots,x_s),\bsy=(y_1,\ldots,y_s)\in [0,1]^s$. Here we call $\Hcal_{\alpha}$ a weighted unanchored Sobolev space of smoothness of order $\alpha$.

We now consider a Walsh series expansion of $\Kcal_{\alpha}$,
  \begin{align*}
     & \quad \sum_{\bsk,\bsl\in \nat_0^s}\hat{\Kcal}_{\alpha}(\bsk,\bsl)\wal_{\bsk}(\bsx)\overline{\wal_{\bsl}(\bsy)} \\
     & = \sum_{u,v\subseteq \{1,\dots,s\}}\sum_{\bsk_u\in \nat^{|u|}}\sum_{\bsl_v\in \nat^{|v|}}\hat{\Kcal}_{\alpha}((\bsk_u,\bszero),(\bsl_v,\bszero))\wal_{(\bsk_u,\bszero)}(\bsx)\overline{\wal_{(\bsl_v,\bszero)}(\bsy)},
  \end{align*}
where we denote by $(\bsk_u,\bszero)$ the $s$-dimensional vector whose $j$-th component is $k_j$ if $j\in u$ and zero otherwise, and we use the same notation for $(\bsl_v,\bszero)$. According to \cite[Lemma~14]{BD09}, for $u,v\subseteq \{1,\dots,s\}$, $\bsk_u\in \nat^{|u|}$ and $\bsl_v\in \nat^{|v|}$, the $((\bsk_u,\bszero),(\bsl_v,\bszero))$-th Walsh coefficient is given by
  \begin{align}\label{eq:walsh_series_Sobolev}
     \hat{\Kcal}_{\alpha}((\bsk_u,\bszero),(\bsl_v,\bszero)) = & \begin{cases}
     \gamma_u \prod_{j\in u}\hat{\Kcal}_{\alpha,(1)}(k_j,l_j) & \text{if $u=v$},  \\
     0 & \text{otherwise} .
    \end{cases}
  \end{align}
Here we write $\gamma_u=\prod_{j\in u}\gamma_j$ for $u\subseteq \{1,\ldots,s\}$, where the empty product equals $1$. A bound on the Walsh coefficients $\hat{\Kcal}_{\alpha,(1)}(k,l)$ for $k,l\in \nat$ is given in \cite[Section~3]{BD09} by
  \begin{align}\label{eq:walsh_coeff_Sobolev}
     \left| \hat{\Kcal}_{\alpha,(1)}(k,l) \right| \le D_{\alpha,b}b^{-\mu_{\alpha}(k)-\mu_{\alpha}(l)}  ,
  \end{align}
where $D_{\alpha,b}$ is positive and depends only on $\alpha$ and $b$, which can be calculated explicitly, and $\mu_{\alpha}(k)$ is defined for $k\in \nat$ as
  \begin{align*}
     \mu_{\alpha}(k)=a_1+\dots+a_{\min(v,\alpha)},
  \end{align*}
where we denote the $b$-adic expansion of $k$ by $k=\kappa_1b^{a_1-1}+\dots+\kappa_vb^{a_v-1}$ with $0< \kappa_1,\dots,\kappa_v<b$ and $a_1>\dots>a_v>0$.

Here $\Kcal_{\alpha}$ is continuous in the usual topology of $[0,1]^s$. Therefore, we have $\int_{[0,1]^s}\sqrt{\Kcal_{\alpha}(\bsx,\bsx)}\rd \bsx<\infty$, and $\sum_{\bsk,\bsl \in \nat_0^s} |\hat{\Kcal}_{\alpha}(\bsk, \bsl)|$ is also finite since
  \begin{align*}
     \sum_{\bsk,\bsl \in \nat_0^s} |\hat{\Kcal}_{\alpha}(\bsk, \bsl)| & \le \sum_{u\subseteq \{1,\dots,s\}} \gamma_uD^{|u|}_{\alpha,b}\sum_{\bsk_u,\bsl_u\in \nat^{|u|}}\prod_{j\in u}b^{-\mu_{\alpha}(k_j)-\mu_{\alpha}(l_j)} \\
     & = \sum_{u\subseteq \{1,\dots,s\}} \prod_{j\in u}\gamma_jD_{\alpha,b}\left(\sum_{k=1}^{\infty}b^{-\mu_{\alpha}(k)}\right)^2 \\
     & = \prod_{j=1}^{s}\left[ 1+\gamma_jD_{\alpha,b}\left( \sum_{k=1}^{\infty}b^{-\mu_{\alpha}(k)}\right)^2\right] ,
  \end{align*}
where the inner sum is shown to be finite as in \cite[Lemma~4.2]{DP07}. 
Thus we can apply Proposition \ref{prop:worst_case_folded_digital_net} to the reproducing kernel $\Kcal_{\alpha}$.

\subsection{Bound on the worst-case error}
Now we show that the worst-case error in $\Hcal_{\alpha}$ of QMC rules using $\pi(\mathcal{P}_{\Phi_b})$ is bounded from above as follows.

\begin{theorem}\label{thm:bound_worst_case}
Let $\mathcal{P}, \mathcal{P}_{\Phi_b}$ be a digital net in $\Zbinfty^s$ and its folded digital net, respectively, and let $\mathcal{P}^{\perp}$ be the dual net of $\mathcal{P}$. Then the worst-case error of QMC rules using $\pi(\mathcal{P}_{\Phi_b})$ in $\Hcal_{\alpha}$ is bounded by
  \begin{align*}
     e(\pi(\mathcal{P}_{\Phi_b}),\Kcal_{\alpha}) \le \sum_{\emptyset \ne u\subseteq \{1,\ldots,s\}}\gamma_u^{1/2} D_{\alpha,b}^{|u|/2}\sum_{\substack{\bsk_u\in \Ecal^{|u|}\\ (\bsk_u,\bszero) \in \mathcal{P}^{\perp}}}b^{-\mu_{\alpha}(\lfloor \bsk_u/b\rfloor)} ,
  \end{align*}
where we write $\mu_{\alpha}(\lfloor \bsk_u/b\rfloor)=\sum_{j\in u}\mu_{\alpha}(\lfloor k_j/b\rfloor)$.
\end{theorem}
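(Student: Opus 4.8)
The plan is to start from the exact expression for the squared worst-case error provided by Proposition~\ref{prop:worst_case_folded_digital_net}, namely
\begin{align*}
     e^2(\pi(\mathcal{P}_{\Phi_b}),\Kcal_{\alpha}) = \sum_{\bsk,\bsl\in \Ecal_0^s \cap \mathcal{P}^{\perp}\setminus \{\bszero\}}\hat{\Kcal}_{\alpha}(\lfloor \bsk/b\rfloor,\lfloor \bsl/b\rfloor),
\end{align*}
and to bound it by inserting the structural formula \eqref{eq:walsh_series_Sobolev} for the Walsh coefficients of $\Kcal_{\alpha}$. The key observation is that \eqref{eq:walsh_series_Sobolev} forces the coefficient to vanish unless the support of $\bsk$ equals the support of $\bsl$; so after decomposing the sum over $\bsk,\bsl$ according to their common support $u\subseteq\{1,\dots,s\}$, only diagonal-support terms survive. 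First I would rewrite each surviving $\hat{\Kcal}_{\alpha}(\lfloor \bsk/b\rfloor,\lfloor \bsl/b\rfloor)$ as $\gamma_u\prod_{j\in u}\hat{\Kcal}_{\alpha,(1)}(\lfloor k_j/b\rfloor,\lfloor l_j/b\rfloor)$ and then apply the absolute-value bound \eqref{eq:walsh_coeff_Sobolev}, which yields a product of $b^{-\mu_{\alpha}(\lfloor k_j/b\rfloor)-\mu_{\alpha}(\lfloor l_j/b\rfloor)}$ together with the constant $D_{\alpha,b}^{|u|}$.

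The next step is to convert the bound on $e^2$ into a bound on $e$. After taking absolute values, the double sum over $\bsk_u$ and $\bsl_u$ factors as a product of a sum over $\bsk_u$ and an identical sum over $\bsl_u$, because the summand splits as a product of a $\bsk$-dependent factor and an $\bsl$-dependent factor. The essential point is that the membership conditions $\bsk\in\Ecal_0^s\cap\mathcal{P}^{\perp}$ and $\bsl\in\Ecal_0^s\cap\mathcal{P}^{\perp}$ are independent of one another, so restricting to a fixed support $u$ gives a squared single sum
\begin{align*}
     \Bigg(\sum_{\substack{\bsk_u\in \Ecal^{|u|}\\ (\bsk_u,\bszero)\in\mathcal{P}^{\perp}}} b^{-\mu_{\alpha}(\lfloor \bsk_u/b\rfloor)}\Bigg)^2.
\end{align*}
I would then take square roots. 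Here I would use the elementary inequality $\sqrt{\sum_u a_u}\le \sum_u \sqrt{a_u}$ for non-negative $a_u$ (subadditivity of the square root over the finite collection of subsets $u$) to pull the square root inside the sum over $u$, turning $\gamma_u D_{\alpha,b}^{|u|}$ into $\gamma_u^{1/2}D_{\alpha,b}^{|u|/2}$ and collapsing the squared single sum back to a single sum. The empty set $u=\emptyset$ must be excluded, corresponding to the removal of $\bszero$ in the dual-net sum; this matches the restriction $\emptyset\ne u$ in the statement.

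I expect the main obstacle to be the careful bookkeeping of the index sets, rather than any deep inequality. Specifically, one must verify that for $\bsk\in\Ecal_0^s$ with support exactly $u$, every nonzero component $k_j$ (for $j\in u$) actually lies in $\Ecal$ — this is what lets the inner sum be written over $\bsk_u\in\Ecal^{|u|}$ — while the zero components contribute trivially and are absorbed into the support decomposition. A second subtlety is confirming that the condition $(\bsk_u,\bszero)\in\mathcal{P}^{\perp}$ correctly encodes the original constraint $\bsk\in\mathcal{P}^{\perp}$ once the support is fixed, since $\mathcal{P}^{\perp}$ is defined via the generating matrices and the zero components simply contribute $C_j^{\top}\bszero=\bszero$ in Definition~\ref{def:dual_net}. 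Once these identifications are made, the support-diagonality from \eqref{eq:walsh_series_Sobolev}, the pointwise bound \eqref{eq:walsh_coeff_Sobolev}, the factorization, and the subadditivity of the square root combine directly to give the claimed estimate.
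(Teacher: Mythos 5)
Your proposal is correct and follows essentially the same route as the paper: start from Proposition~\ref{prop:worst_case_folded_digital_net}, decompose by common support using the diagonal structure in \eqref{eq:walsh_series_Sobolev}, apply the bound \eqref{eq:walsh_coeff_Sobolev}, factor the double sum into a square, and pull the square root through the sum over $u$ (your subadditivity $\sqrt{\sum_u a_u}\le\sum_u\sqrt{a_u}$ is the same inequality the paper uses in the form $\sum_u b_u^2\le(\sum_u b_u)^2$). The bookkeeping points you flag all check out; in particular every $k\in\Ecal$ satisfies $k\ge b$, so $\lfloor k/b\rfloor\ge 1$ and the support of $\lfloor\bsk/b\rfloor$ agrees with that of $\bsk$, which is what makes the support decomposition legitimate.
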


\begin{proof}
Since $\Kcal_{\alpha}$ satisfies all the assumptions in Proposition \ref{prop:worst_case_folded_digital_net}, we can use the result of Proposition \ref{prop:worst_case_folded_digital_net}, (\ref{eq:walsh_series_Sobolev}) and (\ref{eq:walsh_coeff_Sobolev}), in this order, to obtain
  \begin{align*}
     e^2(\pi(\mathcal{P}_{\Phi_b}),\Kcal_{\alpha}) & = \sum_{\bsk,\bsl\in \Ecal_0^s \cap \mathcal{P}^{\perp}\setminus \{\bszero\}}\hat{\Kcal}(\lfloor \bsk/b\rfloor,\lfloor \bsl/b\rfloor) \\
     & = \sum_{\emptyset \ne u\subseteq \{1,\ldots,s\}}\sum_{\substack{\bsk_u,\bsl_u\in \Ecal^{|u|}\\  (\bsk_u,\bszero),(\bsl_u,\bszero)\in \mathcal{P}^{\perp}}}\hat{\Kcal}((\lfloor \bsk_u/b\rfloor,\bszero),(\lfloor \bsl_u/b\rfloor,\bszero)) \\
     & \le \sum_{\emptyset \ne u\subseteq \{1,\ldots,s\}}\gamma_u D_{\alpha,b}^{|u|}\sum_{\substack{\bsk_u,\bsl_u\in \Ecal^{|u|}\\  (\bsk_u,\bszero),(\bsl_u,\bszero)\in \mathcal{P}^{\perp}}}b^{-\mu_{\alpha}(\lfloor \bsk_u/b\rfloor)-\mu_{\alpha}(\lfloor \bsl_u/b\rfloor)} \\
     & = \sum_{\emptyset \ne u\subseteq \{1,\ldots,s\}}\gamma_u D_{\alpha,b}^{|u|}\left(\sum_{\substack{\bsk_u\in \Ecal^{|u|}\\  (\bsk_u,\bszero)\in \mathcal{P}^{\perp}}}b^{-\mu_{\alpha}(\lfloor \bsk_u/b\rfloor)}\right)^2 \\
     & \le \left(\sum_{\emptyset \ne u\subseteq \{1,\ldots,s\}}\gamma_u^{1/2} D_{\alpha,b}^{|u|/2}\sum_{\substack{\bsk_u\in \Ecal^{|u|}\\  (\bsk_u,\bszero)\in \mathcal{P}^{\perp}}}b^{-\mu_{\alpha}(\lfloor \bsk_u/b\rfloor)}\right)^2 .
  \end{align*}
Thus the result follows.
\end{proof}

\section{Component-by-component construction}\label{sec:cbc}

In this section, we investigate the component-by-component (CBC) construction as an explicit means of constructing good FHOPLPSs. More precisely, we attempt to find good HOPLPSs $\Pcal(\bsq,p)$ by using the CBC construction algorithm, such that QMC rules using those folded point sets (FHOPLPSs) in $[0,1]^s$, denoted by $\pi(\Pcal_{\Phi_b}(\bsq,p))$, achieve a good convergence rate of the worst-case error in $\Hcal_{\alpha}$. As above, we write $$\Pcal_{\Phi_b}(\bsq,p):=\{\Phi_b(\bsz)\colon \bsz\in \Pcal(\bsq,p)\}.$$ For this purpose, we employ the bound of the worst-case error shown in Theorem \ref{thm:bound_worst_case} as a quality criterion of $\Pcal(\bsq,p)$. In order to emphasize the role of a modulus $p\in \ZZ_b[x]$ and a generating vector $\bsq\in (\ZZ_b[x])^s$ of HOPLPSs, we simply write the bound on $e^2(\pi(\Pcal_{\Phi_b}(\bsq,p)),\Kcal_{\alpha})$ as
  \begin{align*}
     B_{\alpha}(\bsq,p) = \sum_{\emptyset \ne u\subseteq \{1,\ldots,s\}}\gamma_u^{1/2}D_{\alpha,b}^{|u|/2}\sum_{\substack{\bsk_u\in \Ecal^{|u|}\\ (\bsk_u,\bszero) \in \Pcal^{\perp}(\bsq,p)}}b^{-\mu_{\alpha}(\lfloor \bsk_u/b\rfloor)} .
  \end{align*}

In the following let $b$ be a prime. We write $\bsq_{\tau}=(q_1,\ldots,q_{\tau})\in (\ZZ_b[x])^{\tau}$ for $\tau\in \nat_0$, where $\bsq_{0}$ denotes the empty set, and define
  \begin{align*}
    B_{\alpha}(\bsq_{\tau},p) := \sum_{\emptyset \ne u\subseteq \{1,\ldots,\tau\}}\gamma_u^{1/2}D_{\alpha,b}^{|u|/2}\sum_{\substack{\bsk_u\in \Ecal^{|u|}\\ (\bsk_u,\bszero) \in \Pcal^{\perp}(\bsq_{\tau},p)}}b^{-\mu_{\alpha}(\lfloor \bsk_u/b\rfloor)} ,
  \end{align*}
for $1\le \tau\le s$, where we denote by $(\bsk_u,\bszero)$ the $\tau$-dimensional vector in which the $j$-th component is $k_j$ for $j\in u$ and 0 for $j\in \{1,\ldots,\tau\}\setminus u$. In view of Definition \ref{def:hopoly}, we can restrict ourselves to considering each $q_j\in \ZZ_b[x]$ such that $\deg(q_j)<n$ without loss of generality. We write
  \begin{align*}
    R_{b,n} := \{q\in \ZZ_b[x] \colon \deg(q)<n \} .
  \end{align*}
Then the CBC construction proceeds as follows.

\begin{algorithm}\label{algorithm:cbc}
Let $b$ be a prime. For $s,m,n,\alpha\in \nat$ with $\alpha\ge 2$ and $n\ge m$, do the following:
	\begin{enumerate}
		\item Choose an irreducible polynomial $p\in \ZZ_b[x]$ such that $\deg(p)=n$.
		\item For $\tau=1,\ldots, s$, assume that $\bsq_{\tau-1}$ have already been found. Choose $q_{\tau}\in R_{b,n}$ which minimizes $B_{\alpha}((\bsq_{\tau-1},\tilde{q}_{\tau}),p)$ as a function of $\tilde{q}_{\tau}$.
	\end{enumerate}
\end{algorithm}

The following theorem gives an upper bound on the worst-case error in $\Hcal_{\alpha}$ of FHOPLPS for $\bsq$ and $p$ that are found according to Algorithm \ref{algorithm:cbc}.

\begin{theorem}\label{theorem:cbc}
Let $b$ be a prime. For $1\le \tau\le s$, let $p\in \ZZ_b[x]$ and $\bsq_{\tau}\in R_{b,n}^{\tau}$ be found according to Algorithm \ref{algorithm:cbc}. Then we have
  \begin{align*}
    B_{\alpha}(\bsq_{\tau},p) \le \frac{1}{b^{\min( m/\lambda, 2n)}}\left[-1+\prod_{j=1}^{\tau}\left(1+\gamma_j^{\lambda/2} D_{\alpha,b}^{\lambda/2}A_{\alpha,b,\lambda}\right)\right]^{1/\lambda} ,
  \end{align*}
for any $1/\alpha< \lambda \le 1$, where $A_{\alpha,b,\lambda}$ is positive and depends only on $\alpha,b$ and $\lambda$.
\end{theorem}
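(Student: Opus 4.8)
The plan is to establish the bound by induction on the dimension $\tau$, following the standard CBC analysis technique but adapted to the folded setting. First I would fix $\lambda$ with $1/\alpha < \lambda \le 1$ and use the power-mean (Jensen) inequality to split $B_{\alpha}(\bsq_{\tau},p)$ into a product over the coordinates. Specifically, since $0 < \lambda \le 1$, I would apply the inequality $\sum_i a_i \le (\sum_i a_i^{\lambda})^{1/\lambda}$ to the outer sum over subsets $u$, so that it suffices to bound the quantity
\begin{align*}
  B_{\alpha}^{\lambda}(\bsq_{\tau},p) \le \sum_{\emptyset \ne u\subseteq \{1,\ldots,\tau\}}\gamma_u^{\lambda/2}D_{\alpha,b}^{\lambda|u|/2}\left(\sum_{\substack{\bsk_u\in \Ecal^{|u|}\\ (\bsk_u,\bszero) \in \Pcal^{\perp}(\bsq_{\tau},p)}}b^{-\mu_{\alpha}(\lfloor \bsk_u/b\rfloor)}\right)^{\lambda}.
\end{align*}

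Next I would set up the induction. The base case $\tau=0$ is trivial since the empty product minus one vanishes. For the inductive step, I would isolate the contribution of the newly chosen component $q_{\tau}$ by separating the subsets $u$ into those not containing $\tau$ (governed by the inductive hypothesis on $B_{\alpha}(\bsq_{\tau-1},p)$) and those containing $\tau$. The crucial device is the averaging argument: since $q_{\tau}$ is chosen to \emph{minimize} $B_{\alpha}((\bsq_{\tau-1},\tilde q_{\tau}),p)$, its value is no larger than the average over all $\tilde q_{\tau}\in R_{b,n}$. I would therefore bound $B_{\alpha}(\bsq_{\tau},p)^{\lambda}$ by its mean over $\tilde q_{\tau}$, which converts the sum over the dual lattice into a sum over indices $\bsk_u$ weighted by the probability (over random $q_{\tau}$) that $(\bsk_u,\bszero)\in \Pcal^{\perp}(\bsq_{\tau},p)$. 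Using Lemma \ref{lem:dual_poly_Walsh} together with Definition \ref{def:hopoly_dual_net}, this averaging collapses to a factor of roughly $b^{-\min(m,2n)/\text{something}}$ coming from the congruence condition modulo $p$ with $\deg(a)<n-m$, while for the remaining free indices the sum $\sum_{k\in\Ecal} b^{-\lambda\mu_{\alpha}(\lfloor k/b\rfloor)}$ must be shown to converge to a finite constant.

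The key technical estimate, and the step I expect to be the main obstacle, is controlling the sum $A_{\alpha,b,\lambda} := \sum_{k\in\Ecal} b^{-\lambda\mu_{\alpha}(\lfloor k/b\rfloor)}$ and showing it is finite precisely when $\lambda > 1/\alpha$. Here the folding introduces the restriction $k\in\Ecal$ (digit-sum divisible by $b$) and the shifted argument $\lfloor k/b\rfloor$ of $\mu_{\alpha}$, so I cannot directly quote the analogous convergence result from \cite{DP07,BD09} but must re-derive it. I would organize the sum by the value $\mu_{\alpha}(\lfloor k/b\rfloor)$, counting how many $k\in\Ecal$ yield each value; the definition of $\mu_{\alpha}$ caps the exponent using at most the $\alpha$ highest nonzero digits, so the number of such $k$ grows geometrically while $b^{-\lambda\mu_{\alpha}}$ decays, and convergence hinges on the geometric-series condition $\lambda\alpha>1$. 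The membership constraint $k\in\Ecal$ only removes a constant fraction of terms and does not affect convergence, though it requires care in the counting.

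Finally, I would combine the pieces. After the averaging step, the contribution of subsets containing $\tau$ factors as $\gamma_{\tau}^{\lambda/2}D_{\alpha,b}^{\lambda/2}A_{\alpha,b,\lambda}$ times the product structure over $u\setminus\{\tau\}$ already bounded inductively, with the common prefactor $b^{-\min(m/\lambda,2n)\lambda}$ — the two regimes arising from whether the digit-interlacing degree $n$ or the net size $m$ dominates the dual-lattice congruence. Assembling the recursion $B_{\alpha}(\bsq_{\tau},p)^{\lambda}\le b^{-\min(m,2n\lambda)}\bigl[-1+\prod_{j=1}^{\tau}(1+\gamma_j^{\lambda/2}D_{\alpha,b}^{\lambda/2}A_{\alpha,b,\lambda})\bigr]$ and taking $\lambda$-th roots yields the stated bound. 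I would conclude by verifying that $A_{\alpha,b,\lambda}$ indeed depends only on $\alpha$, $b$, and $\lambda$, as claimed.
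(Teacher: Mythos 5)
Your plan is essentially the paper's proof: induction on the dimension, Jensen's inequality $(\sum a_i)^{\lambda}\le\sum a_i^{\lambda}$, the averaging argument over $\tilde q_{\tau}\in R_{b,n}$ with the solution count of $\tr_n(\bsk_u)\cdot\bsq_u+\tr_n(k_{\tau})\tilde q_{\tau}\equiv a\pmod p$ split according to whether $\tr_n(k_{\tau})$ is a multiple of $p$, and the two bounds on $\sum_{k\in\Ecal}b^{-\lambda\mu_{\alpha}(\lfloor k/b\rfloor)}$ (which the paper imports as Lemma~\ref{lem:sum-of-digit} from \cite{GSYxx} rather than re-deriving). One caveat on your "main obstacle" paragraph: the restriction $k\in\Ecal$ is not merely a harmless thinning of terms --- for the $b^n\mid k$ portion it is precisely what forces $\lfloor k/b\rfloor$ to have at least two nonzero digits and hence $\mu_{\alpha}(\lfloor k/b\rfloor)\ge 2n+1$, which is the source of the factor $b^{-2\lambda n}$ and ultimately of the halved modulus degree $n\ge\alpha m/2$.
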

\noindent
The proof of this theorem is given in Appendix~\ref{appendix:a}.

\begin{remark}\label{remark:cbc}
Let $p\in \ZZ_b[x]$ and $\bsq\in R_{b,n}^s$ be found according to Algorithm \ref{algorithm:cbc}. When $n\ge \alpha m/2$, we have $\min(m/\lambda, 2n)=m/\lambda$ so that
  \begin{align*}
    B_{\alpha}(\bsq,p) \le \frac{1}{b^{m/\lambda}}\left[-1+\prod_{j=1}^{s}\left(1+\gamma_j^{\lambda/2}D_{\alpha,b}^{\lambda/2}A_{\alpha,b,\lambda}\right)\right]^{1/\lambda} ,
  \end{align*}
for $1/\alpha<\lambda \le 1$. Since we cannot achieve the convergence rate of the worst-case error of order $b^{-\alpha m}$ in $\Hcal_{\alpha}$ \cite{S63}, our result is optimal. The degree of the modulus required to achieve the optimal rate of the worst-case error is reduced by half as compared to that obtained in \cite[Theorem~1]{BDLNP12}.
\end{remark}

\section{Fast construction algorithm}\label{sec:fast}
Finally, in this section, we discuss how to calculate $B_{\alpha}(\bsq,p)$ efficiently and how to obtain the fast CBC construction using the fast Fourier transform.

\subsection{Efficient calculation of the quality criterion}
From Lemma \ref{lem:folded_2} and the definition of $B_{\alpha}(\bsq,p)$, we have
  \begin{align*}
     B_{\alpha}(\bsq,p) = \sum_{\emptyset \ne u\subseteq \{1,\ldots,s\}}\gamma_u^{1/2}D_{\alpha,b}^{|u|/2}\sum_{\substack{\bsk_u\in \nat^{|u|}\\ (\bsk_u,\bszero) \in \Pcal_{\Phi_b}^{\perp}(\bsq,p)}}b^{-\mu_{\alpha}(\bsk_u)} ,
  \end{align*}
where $\mathcal{P}_{\Phi_b}^{\perp}(\bsq,p)$ is the dual net of $\mathcal{P}_{\Phi_b}(\bsq,p)$. Using Lemma \ref{lem:dual_Walsh}, we have
  \begin{align*}
     B_{\alpha}(\bsq,p) & = \sum_{\emptyset \ne u\subseteq \{1,\ldots,s\}}\gamma_u^{1/2}D_{\alpha,b}^{|u|/2}\sum_{\bsk_u\in \nat^{|u|}}b^{-\mu_{\alpha}(\bsk_u)}\frac{1}{b^m}\sum_{\bsz\in \Pcal_{\Phi_b}(\bsq,p)}W_{(\bsk_u,\bszero)}(\bsz) \\
     & = \sum_{\emptyset \ne u\subseteq \{1,\ldots,s\}}\gamma_u^{1/2}D_{\alpha,b}^{|u|/2}\sum_{\bsk_u\in \nat^{|u|}}b^{-\mu_{\alpha}(\bsk_u)}\frac{1}{b^m}\sum_{\bsz\in \Pcal(\bsq,p)}W_{(\bsk_u,\bszero)}(\Phi_b(\bsz)) \\
     & = \frac{1}{b^m}\sum_{\bsz\in \Pcal(\bsq,p)}\sum_{\emptyset \ne u\subseteq \{1,\ldots,s\}}\gamma_u^{1/2}D_{\alpha,b}^{|u|/2}\sum_{\bsk_u\in \nat^{|u|}}b^{-\mu_{\alpha}(\bsk_u)}W_{(\bsk_u,\bszero)}(\Phi_b(\bsz)) \\
     & = \frac{1}{b^m}\sum_{\bsz\in \Pcal(\bsq,p)}\sum_{\emptyset \ne u\subseteq \{1,\ldots,s\}}\prod_{j\in u}\gamma_j^{1/2}D_{\alpha,b}^{1/2}\sum_{k_j=1}^{\infty}b^{-\mu_{\alpha}(k_j)}W_{k_j}(\Phi_b(z_j)) \\
     & = -1+\frac{1}{b^m}\sum_{\bsz\in \Pcal(\bsq,p)}\prod_{j=1}^{s}\left[ 1+\gamma_j^{1/2}D_{\alpha,b}^{1/2}\chi_b\circ \Phi_b(z_j)\right] ,
  \end{align*}
where we define the function $\chi_b:\Zbinfty \to \RR$ by
  \begin{align*}
     \chi_b(z) := \sum_{k=1}^{\infty}b^{-\mu_{\alpha}(k)}W_{k}(z) .
  \end{align*}
The difficulty in calculating $B_{\alpha}(\bsq,p)$ lies in the fact that $\chi_b(z)$ is expressed as an infinite sum over $k\in \nat$. Under the assumption that $z\in G$ is given in the form $$(\zeta_1,\zeta_2,\ldots,\zeta_n,0,0,\ldots)^{\top},$$ for $n \in \nat$ and $\zeta_i\in \ZZ_b$, $1\le i\le n$, it is possible to calculate $\chi_b(z)$ in at most $O(\alpha n)$ arithmetic operations as in \cite[Theorem~2]{BDLNP12}. In the following we show that it is also possible to calculate $\chi_b\circ \Phi_b(z)$ in at most $O(\alpha n)$ arithmetic operations under the same assumption on $z$. Notice that this assumption on $z$ is natural from Definition \ref{def:hopoly}.

\begin{theorem}\label{theorem:chi_calculation}
Let $z\in \Zbinfty$ be given in the form $(\zeta_1,\zeta_2,\ldots,\zeta_n,0,0,\ldots)^{\top}$ for $n \in \nat$ and $\zeta_i\in \ZZ_b$, $1\le i\le n$, that is, $\zeta_{n+1}=\zeta_{n+2}=\cdots=0$. Then $\chi_b\circ \Phi_b(z)$ can be calculated as follows: we define the following vectors
  \begin{align*}
    \bsU(\zeta_1) & = (U_0(\zeta_1),U_1(\zeta_1),\ldots,U_{\alpha-1}(\zeta_1)) ,\\
    \tilde{\bsU}(\zeta_1) & = (\tilde{U}_0(\zeta_1),\tilde{U}_1(\zeta_1),\ldots,\tilde{U}_{\alpha-1}(\zeta_1)) ,\\
    \bsV(z) & = (V_1(z),\ldots,V_{\alpha-1}(z)) ,\\
    \tilde{\bsV}(z) & = (\tilde{V}_{\alpha}(z),\ldots,\tilde{V}_1(z)) ,
  \end{align*}
where we set
  \begin{align*}
    U_0(\zeta_1) & = 1 ,\\
    U_t(\zeta_1) & = \frac{1}{b^{t(n-1)}}\prod_{i=1}^{t}\frac{\rho(\zeta_1)}{b^i-1}\quad \text{for $1\le t\le \alpha-1$},\\
    \tilde{U}_t(\zeta_1) & = \sum_{v=t}^{\alpha-1}U_{v-t}(\zeta_1)\quad \text{for $0\le t\le \alpha-1$},
  \end{align*}
where
  \begin{align*}
    \rho(\zeta_1) = \begin{cases}
    b-1 & \text{if $\zeta_1=0$},  \\
    -1 & \text{otherwise} .
    \end{cases}
  \end{align*}
We further define
  \begin{align*}
    V_t(z) & = \sum_{0<a_t<\cdots<a_1<n}\prod_{i=1}^{t}b^{-a_i}L(z,a_i+1) ,\\
    \tilde{V}_t(z) & = \sum_{0<a_t<\cdots<a_1<n}b^{a_t-1}[\Phi_b(z)\in H_{a_t-1}] \prod_{i=1}^{t}b^{-a_i}L(z,a_i+1) ,
  \end{align*}
for $1\le t\le \alpha-1$ and $1\le t\le \alpha$, respectively. Here the value of $[\Phi_b(z)\in H_{a_t-1}]$ equals 1 if $\Phi_b(z)\in H_{a_t-1}$, and $0$ otherwise, where $H_{a_t-1}:=\{(\zeta'_1,\zeta'_2,\dots)^{\top}\in \Zbinfty\colon \zeta'_1=\cdots=\zeta'_{a_t-1}=0\}$ for $a_t\in \nat$. Furthermore, $L(z,a_i+1)$ is defined as
  \begin{align*}
    L(z,a_i+1) := \begin{cases}
     b-1 & \text{if $\zeta_{a_i+1}=\zeta_{1}$},  \\
     -1  & \text{if $\zeta_{a_i+1}\ne \zeta_{1}$}.  \\
    \end{cases}
  \end{align*}
Using this notation, we have the following:
\begin{itemize}
\item If $\zeta_1=\cdots = \zeta_n=0$,
  \begin{align*}
    \chi_b\circ \Phi_b(z) = \sum_{v=1}^{\alpha-1}\prod_{i=1}^{v}\frac{b-1}{b^i-1}+\frac{b^{\alpha}-1}{b^{\alpha}-b}\prod_{i=1}^{\alpha}\frac{b-1}{b^i-1} .
  \end{align*}
\item Otherwise,
  \begin{align*}
    \chi_b\circ \Phi_b(z) = \tilde{\bsU}_{1:\alpha-1}(\zeta_1)\cdot \bsV(z)+(\tilde{U}_{0}(\zeta_1)-1)+\bsU(\zeta_1)\cdot \tilde{\bsV}(z) ,
  \end{align*}
where $\bsa\cdot \bsb$ denotes the dot product and $\tilde{\bsU}_{1:\alpha-1}(\zeta_1)$ is the vector of the last $\alpha-1$ components of $\tilde{\bsU}(\zeta_1)$.
\end{itemize}
\end{theorem}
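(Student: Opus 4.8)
The plan is to expand $\chi_b\circ\Phi_b(z)$ straight from its definition and reorganise the resulting series over $k\in\nat$ according to the positions of the nonzero $b$-adic digits of $k$. Writing $k$ through its nonzero digits $\kappa_i$ at positions $a_1>\dots>a_v$, one has $W_k(\Phi_b(z))=\prod_{i=1}^{v}\omega_b^{\kappa_i\eta_{a_i}}$ with $\eta_a\equiv\zeta_{a+1}-\zeta_1\pmod b$, and $\mu_{\alpha}(k)=a_1+\dots+a_{\min(v,\alpha)}$. First I would carry out the sum over the digit values: since $\sum_{\kappa=1}^{b-1}\omega_b^{\kappa\eta}$ equals $b-1$ when $\eta\equiv0$ and $-1$ otherwise, summing each $\kappa_i$ independently replaces the character by the factor $L(z,a_i+1)$. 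This reduces the quantity to a sum over finite nonempty position sets,
\[
\chi_b\circ\Phi_b(z)=\sum_{v\ge1}\ \sum_{a_1>\dots>a_v\ge1}b^{-\mu_{\alpha}}\prod_{i=1}^{v}L(z,a_i+1),\qquad \mu_{\alpha}=a_1+\dots+a_{\min(v,\alpha)}.
\]

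The decisive structural observation is the dichotomy at the threshold $n$: for any position $a\ge n$ one has $\zeta_{a+1}=0$, hence $\eta_a=-\zeta_1$ and $L(z,a+1)=\rho(\zeta_1)$, a constant. I would therefore split each position set into its \emph{large} part ($a\ge n$), which sits at the top because the positions decrease, and its \emph{small} part ($a<n$). Summing $b^{-\sum a_i}\rho(\zeta_1)^t$ over $t$ large positions is a shifted geometric series equal exactly to $U_t(\zeta_1)$, while the corresponding sum over $t$ small positions, carrying the genuine factors $L(z,a_i+1)$, is $V_t(z)$. To handle the truncation in $\mu_{\alpha}$ — only the top $\min(v,\alpha)$ positions enter the exponent — I would partition each configuration into its \emph{counting set} $C$ (the top $\min(v,\alpha)$ positions) and a \emph{tail} $T$ lying strictly below $\min C$. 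When $|C|<\alpha$ the tail must be empty, and summing over all large/small splits of $C$ of sizes $1,\dots,\alpha-1$ produces $\sum_{t=1}^{\alpha-1}\tilde U_t(\zeta_1)V_t(z)$ together with $(\tilde U_0(\zeta_1)-1)$, where $\tilde U_t=\sum_{j=0}^{\alpha-1-t}U_j$ collects the admissible number of large counting positions. When $|C|=\alpha$ the tail ranges freely over $\{1,\dots,\min C-1\}$, and the identity $\prod_{a=1}^{m}\bigl(1+L(z,a+1)\bigr)=b^{m}\,[\Phi_b(z)\in H_{m}]$ collapses the tail sum into exactly the factor appearing in $\tilde V_t$; pairing the $\alpha-t$ large counting positions ($U_{\alpha-t}$) with the $t$ small ones ($\tilde V_t$) yields $\sum_{t=1}^{\alpha}U_{\alpha-t}(\zeta_1)\tilde V_t(z)$. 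Assembling the three blocks gives the stated dot-product formula.

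The degenerate case $\zeta_1=\dots=\zeta_n=0$, where $z$ and hence $\Phi_b(z)$ is the zero element and every character equals $1$, I would treat separately: then $\chi_b\circ\Phi_b(z)=\sum_{k\ge1}b^{-\mu_{\alpha}(k)}$, a pure geometric computation using $\sum_{a_1>\dots>a_t\ge1}b^{-\sum a_i}=\prod_{i=1}^{t}1/(b^i-1)$ for the range $v<\alpha$ and a binomial summation over the tail (via $\sum_{w}(b-1)^w\binom{m}{w}=b^{m}$) for $v\ge\alpha$, producing the closed form with the factor $(b^{\alpha}-1)/(b^{\alpha}-b)$. Finally, the $O(\alpha n)$ operation count follows since $\bsU,\tilde{\bsU},\bsV,\tilde{\bsV}$ each have $O(\alpha)$ entries computable by recursions in $n$, exactly as in the analogous calculation of \cite{BDLNP12}.

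I expect the main obstacle to be controlling the infinite tail in the non-degenerate case. When $\zeta_1\ne0$ the folded point $\Phi_b(z)$ has infinitely many nonzero coordinates, so the configurations in which \emph{all} $\alpha$ counting positions are large are genuinely infinite sums not captured by any of $U,V,\tilde U,\tilde V$, and the argument must show that these boundary contributions vanish. The key leverage is that $1+\rho(\zeta_1)=0$ whenever $\zeta_1\ne0$, which forces any tail reaching into the region $a\ge n$ to die, together with the hypothesis $(\zeta_1,\dots,\zeta_n)\ne\bszero$, which kills the surviving terms when $\zeta_1=0$. Making this cancellation rigorous — and checking the boundary configurations with $\min C=n$ with care — is the delicate heart of the proof.
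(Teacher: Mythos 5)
Your architecture coincides with the paper's own proof: expand $\chi_b\circ\Phi_b(z)$ over the nonzero digit positions $a_1>\cdots>a_v$ of $k$, sum out the digit values to produce the factors $L(z,a_i+1)$, split positions at the threshold $n$ using $L(z,a+1)=\rho(\zeta_1)$ for $a\ge n$ (which yields the geometric sums $U_t$ and the combinations $\tilde U_t$), and collapse the tail below the $\alpha$ counting positions via $\prod_{j=1}^{a_\alpha-1}\bigl(1+L(z,j+1)\bigr)=b^{a_\alpha-1}[\Phi_b(z)\in H_{a_\alpha-1}]$. The treatment of the degenerate case and the $O(\alpha n)$ cost are likewise as in the paper. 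Up to the last step the proposal is a faithful reconstruction of the argument in Appendix~B.

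The gap is precisely the step you flag and then leave open: proving that the configurations in which \emph{all} $\alpha$ counting positions lie in $\{n,n+1,\dots\}$ contribute nothing. Your mechanism ($1+\rho(\zeta_1)=0$ for $\zeta_1\ne 0$, resp.\ the hypothesis $(\zeta_1,\dots,\zeta_n)\ne\bszero$ for $\zeta_1=0$) does dispose of the case $a_\alpha\ge n+1$, since the tail product then contains a vanishing factor. But for the boundary configuration $a_\alpha=n$ the tail product is $\prod_{j=1}^{n-1}\bigl(1+L(z,j+1)\bigr)$, which is nonzero exactly when $\zeta_1=\cdots=\zeta_n$; if this common value is nonzero, the block survives and is represented by none of $U,V,\tilde U,\tilde V$. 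The paper's proof dismisses this block by asserting that $\Phi_b(z)\notin H_{a_\alpha-1}$ whenever $a_\alpha\ge n$, but membership in $H_{n-1}$ only requires $\zeta_2=\cdots=\zeta_n=\zeta_1$, not $\zeta_1=0$, so the assertion is justified only for $a_\alpha\ge n+1$. A concrete check: for $b=2$, $\alpha=2$, $n=2$, $z=(1,1,0,0,\dots)^{\top}$ one has $\Phi_2(z)=(0,1,1,\dots)^{\top}$ and $\chi_2\circ\Phi_2(z)=-1/8$ by direct summation, while the stated formula evaluates to $-1/4$; the difference $1/8$ is exactly the $a_\alpha=n$ block. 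So your instinct that this is ``the delicate heart'' is correct, but the cancellation you hope for does not hold in general: completing the argument requires either excluding inputs with $\zeta_1=\cdots=\zeta_n\ne 0$ or adding the corresponding correction term, and as written neither your proposal nor the paper's proof closes this case.
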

\noindent
The proof of this theorem is given in Appendix~\ref{appendix:b}.

\begin{remark}
Note that $V_t(z)$ and $\tilde{V}_t(z)$ can be calculated as
  \begin{align*}
    V_t(z) = \sum_{a_1=t}^{n-1}b^{-a_1}L(z,a_1+1)\sum_{a_2=t-1}^{a_1-1}b^{-a_2}L(z,a_2+1)\cdots \sum_{a_t=1}^{a_{t-1}-1}b^{-a_t}L(z,a_t+1) ,
  \end{align*}
and
  \begin{align*}
    \tilde{V}_t(z) & = \sum_{a_1=t}^{n-1}b^{-a_1}L(z,a_1+1)\sum_{a_2=t-1}^{a_1-1}b^{-a_2}L(z,a_2+1) \\
                   & \quad \cdots \sum_{a_t=1}^{a_{t-1}-1}b^{-1}[\Phi_b(z)\in H_{a_t-1}]L(z,a_t+1),
  \end{align*}
respectively. By using these forms, the vectors $\bsV(z)$ and $\tilde{\bsV}(z)$ can be computed in $O(\alpha n)$ operations according to \cite[Algorithm~4]{BDLNP12}. Thus the value of $\chi_b\circ \Phi_b(z)$ can be computed in at most $O(\alpha n)$ operations. 
\end{remark}

\subsection{Fast component-by-component construction}
We finally show how to obtain the fast CBC construction using the fast Fourier transform. Our exposition here follows basically along the same lines as \cite[Subsection~6.2]{Gxx2}. By denoting
  \begin{align*}
    P_{\tau-1}(h) := \prod_{j=1}^{\tau-1}\left[ 1+\gamma_j^{1/2}D_{\alpha,b}^{1/2}\chi_b\circ \Phi_b\left(v_n\left(\frac{h(x)q_j(x)}{p(x)}\right)\right)\right] ,
  \end{align*}
and 
  \begin{align*}
    Q(q,h) := \chi_b\circ \Phi_b\left(v_n\left(\frac{h(x)q(x)}{p(x)}\right)\right) ,
  \end{align*}
where the arguments $h,q$ of $P_{\tau-1}$ and $Q$ are understood as integers, and $P_{0}(h)=1$ for any $h\in \nat_0$, we have
  \begin{align*}
    & \quad B_{\alpha}((\bsq_{\tau-1},\tilde{q}_{\tau}),p) \\
    & = -1+\frac{1}{b^m}\sum_{\bsz\in \Pcal((\bsq_{\tau-1},\tilde{q}_{\tau}),p)}\prod_{j=1}^{\tau}\left[ 1+\gamma_j^{1/2}D_{\alpha,b}^{1/2}\chi_b\circ \Phi_b(z_j)\right] \\
    & = -1+\frac{1}{b^m}\sum_{h=0}^{b^m-1}P_{\tau-1}(h)\left[ 1+\gamma_{\tau}^{1/2}D_{\alpha,b}^{1/2}\chi_b\circ \Phi_b\left(v_n\left(\frac{h(x)\tilde{q}_{\tau}(x)}{p(x)}\right)\right)\right] \\
    & = -1+\frac{1}{b^m}\sum_{h=0}^{b^m-1}P_{\tau-1}(h)+\frac{\gamma_{\tau}^{1/2}D_{\alpha,b}^{1/2}}{b^m}\sum_{h=0}^{b^m-1}P_{\tau-1}(h)Q(\tilde{q}_{\tau},h) \\
    & = B_{\alpha}(\bsq_{\tau-1},p)+\frac{\gamma_{\tau}^{1/2}D_{\alpha,b}^{1/2}}{b^m}\left[ P_{\tau-1}(0)Q(\tilde{q}_{\tau},0)+\sum_{h=1}^{b^m-1}P_{\tau-1}(h)Q(\tilde{q}_{\tau},h)\right] ,
  \end{align*}
where the argument $\tilde{q}_{\tau}$ of $Q$ is again understood as an integer. Since we have $Q(q,0) = \chi_b(0)$ for any $q\in R_{b,n}$, we can focus on the term 
  \begin{align}\label{eq:fast_cbc_1}
    \sum_{h=1}^{b^m-1}P_{\tau-1}(h)Q(\tilde{q}_{\tau},h) ,
  \end{align}
and find $\tilde{q}_{\tau}=q_{\tau}$ which minimizes (\ref{eq:fast_cbc_1}) as a function of $\tilde{q}_{\tau}\in R_{b,n}$ in Algorithm \ref{algorithm:cbc}. Moreover, for $\tilde{q}_{\tau}=0$, we have
  \begin{align*}
    B_{\alpha}((\bsq_{\tau-1},0),p) & = -1+\frac{1}{b^m}\sum_{h=0}^{b^m-1}P_{\tau-1}(h)\left[ 1+\gamma_{\tau}^{1/2}D_{\alpha,b}^{1/2}\chi_b(0)\right] \\
    & = -\gamma_{\tau}^{1/2}D_{\alpha,b}^{1/2}\chi_b(0)+\left[ 1+\gamma_{\tau}^{1/2}D_{\alpha,b}^{1/2}\chi_b(0)\right]B_{\alpha}(\bsq_{\tau-1},p) ,
  \end{align*}
which can be computed at a negligibly low cost, so that we only need to consider $\tilde{q}_{\tau}\in R_{b,n}\setminus \{0\}$ in the following.

According to Algorithm \ref{algorithm:cbc}, we choose an irreducible polynomial $p\in \ZZ_b[x]$ with $\deg(p)=n$. Thus, there exists a primitive element $g\in R_{b,n}\setminus \{0\}$, which satisfies
  \begin{align*}
    \{g^0 \bmod{p}, g^1 \bmod{p},\ldots,g^{b^n-2} \bmod{p}\} = R_{b,n}\setminus \{0\} ,
  \end{align*}
and $g^{-1} \bmod{p}=g^{b^n-2} \bmod{p}$. Using this property of $g$, one can obtain the values of (\ref{eq:fast_cbc_1}) for all $\tilde{q}_{\tau}\in R_{b,n}\setminus \{0\}$ by a matrix-vector multiplication $\bsQ_{\mathrm{perm}}\bsP_{\tau-1}$, where $\bsQ_{\mathrm{perm}}$ is a $(b^n-1)\times (b^m-1)$ matrix given by permuting the rows of $\bsQ$ as
  \begin{align*}
    \bsQ_{\mathrm{perm}} :=(Q(g^i \bmod{p},h))_{0\le i\le b^n-2,0<h<b^m} ,
  \end{align*}
and $\bsP_{\tau-1}$ is a vector defined as
  \begin{align*}
    \bsP_{\tau-1} := (P_{\tau-1}(1),\ldots,P_{\tau-1}(b^m-1))^{\top} .
  \end{align*}
However, the multiplication $\bsQ_{\mathrm{perm}}\bsP_{\tau-1}$ requires $O(b^{m+n})$ arithmetic operations, which can be reduced as follows.

As a first step, we add more columns to $\bsQ_{\mathrm{perm}}$ to obtain a $(b^n-1)\times (b^n-1)$ matrix $\bsQ_{\mathrm{perm}}'$ given by
  \begin{align*}
    \bsQ_{\mathrm{perm}}' :=(Q(g^i \bmod{p},h))_{0\le i\le b^n-2,0<h<b^n} .
  \end{align*}
We also add more elements to $\bsP_{\tau-1}$ to obtain a vector $\bsP_{\tau-1}'=(\bsP_{\tau-1}^\top,\bszero^\top)^\top$, where $\bszero$ is the ($b^n-b^m$)-dimensional zero vector, such that we have $\bsQ_{\mathrm{perm}}\bsP_{\tau-1}=\bsQ_{\mathrm{perm}}'\bsP_{\tau-1}'$. Next we permute the rows of $\bsQ_{\mathrm{perm}}'$ to obtain a $(b^n-1)\times (b^n-1)$ \emph{circulant} matrix
  \begin{align*}
    \bsQ_{\mathrm{circ}} & := (Q(g^i \bmod{p},g^{-h} \bmod{p}))_{0\le i,h\le b^n-2} \\
    & = \left( \chi_b\circ \Phi_b\left(v_n\left(\frac{(g^{i-h} \bmod{p})(x)}{p(x)}\right)\right) \right)_{0\le i,h\le b^n-2}  ,
  \end{align*}
where the argument $g^{-h} \bmod{p}$ of $Q$ is again understood as an integer. Further, we introduce a vector $\bsR_{\tau-1}=(R_{\tau-1}(0),\ldots,R_{\tau-1}(b^n-2))^\top$ such that
  \begin{align*}
    R_{\tau-1}(h) = \begin{cases}
    P_{\tau-1}(g^{-h}\bmod{p}) & \text{if $\deg(g^{-h}\bmod{p})<m$},  \\
    0 & \text{otherwise} ,
    \end{cases}
  \end{align*}
for $0\le h\le b^n-2$. Using these notations, we have $\bsQ_{\mathrm{perm}}'\bsP_{\tau-1}'=\bsQ_{\mathrm{circ}}\bsR_{\tau-1}$, which implies that a matrix-vector multiplication $\bsQ_{\mathrm{circ}}\bsR_{\tau-1}$ gives the values of (\ref{eq:fast_cbc_1}) for all $\tilde{q}_{\tau}\in R_{b,n}\setminus \{0\}$. Since the matrix $\bsQ_{\mathrm{circ}}$ is circulant, the multiplication $\bsQ_{\mathrm{circ}}\bsR_{\tau-1}$ can be done efficiently by using the fast Fourier transform, requiring only $O(nb^n)$ arithmetic operations \cite{NC06a}. This way we can reduce the cost from $O(b^{m+n})$ to $O(nb^n)$ operations for finding $q_{\tau}\in R_{b,n}$ which minimizes $B_{\alpha}((\bsq_{\tau-1},\tilde{q}_{\tau}),p)$ as a function of $\tilde{q}_{\tau}$.

Suppose that $g^i\bmod{p}\in R_{b,n}\setminus \{0\}$ minimizes $B_{\alpha}((\bsq_{\tau-1},\tilde{q}_{\tau}),p)$ as a function of $\tilde{q}_{\tau}$. We update $\bsR_{\tau}$ by
  \begin{align*}
    R_{\tau}(h) = \begin{cases}
    & R_{\tau-1}(h) \left[ 1+\gamma_{\tau}^{1/2}D_{\alpha,b}^{1/2} Q(g^i \bmod{p},g^{-h} \bmod{p})\right] \\
    & \qquad \text{if $\deg(g^{-h}\bmod{p})<m$} ,  \\
    & \\ 
    & 0 \quad \text{otherwise} ,
    \end{cases}
  \end{align*}
for $0\le h\le b^n-2$. We then move on to the next component. In summary, the fast CBC construction proceeds as follows.

    \begin{enumerate}
        \item Choose an irreducible polynomial $p\in \ZZ_b[x]$ with $\deg(p)=n$.
        \item Evaluate $\bsQ_{\mathrm{circ}}$ and $\bsR_{0}$.
        \item For $\tau=1,\ldots, s$, do the following:
    \begin{itemize}
        \item Compute the matrix-vector multiplication $\bsQ_{\mathrm{circ}}\bsR_{\tau-1}$ by using the fast Fourier transform.
        \item Find $q_{\tau}\in R_{b,n}\setminus \{0\}$ which gives the minimum value among the components of $\bsQ_{\mathrm{circ}}\bsR_{\tau-1}$.
        \item Update the vector $\bsR_{\tau}$.
    \end{itemize}
    \end{enumerate}

In Step 2, since the matrix $\bsQ_{\mathrm{circ}}$ is circulant, we only need to evaluate one column of $\bsQ_{\mathrm{circ}}$. Since one column consists of $b^n-1$ elements, each of which can be computed in at most $O(\alpha n)$ operations as in Theorem \ref{theorem:chi_calculation}, Step 2 can be done in at most $O(\alpha nb^n)$ operations. The matrix-vector multiplication $\bsQ_{\mathrm{circ}}\bsR_{\tau-1}$ can be done in $O(nb^n)$ operations and $\bsR_{\tau}$ can be updated in $O(b^n)$ operations, so that Step 3 can be done in $O(snb^n)$ operations. In total, the fast CBC construction can be done in $O(snb^n)$ operations. As for the required memory, we need to store one column of $\bsQ_{\mathrm{circ}}$ and $\bsR_{\tau}$, both of which require $O(b^n)$ memory space.

We now recall that we can construct good FHOPLPSs which achieve the optimal rate of the worst-case error when $n\ge \alpha m/2$, see Remark \ref{remark:cbc}. Depending on whether $\alpha m$ is even or odd, we may choose $n=\alpha m/2$ or $n=(\alpha m+1)/2$, respectively. In both cases, the computational cost of the fast CBC construction becomes $O(s\alpha mb^{\alpha m/2})=O(s\alpha N^{\alpha /2}\log N)$ operations using $O(b^{\alpha m/2})=O(N^{\alpha/2})$ memory.

\begin{remark}\label{remark:cbc_digital_shift}
In \cite{GSYxx}, the authors studied the mean square worst-case error of \emph{digitally shifted and then folded} higher order polynomial lattice point sets in the same function space $\Hcal_{\alpha}$ and proved the existence of good higher order polynomial lattice rules which achieve the optimal convergence rate. Following an argument similar to the previous section, it is possible to prove that the CBC construction can find good higher order polynomial lattice rules which achieve the optimal convergence rate. Moreover, the fast CBC construction can also be obtained by slightly modifying the argument of this section.
\end{remark}

\appendix

\section{Proof of Theorem~\ref{theorem:cbc}}\label{appendix:a}
In order to prove Theorem~\ref{theorem:cbc}, we need the following lemma, see \cite[Lemma~25]{GSYxx} for the proof.

\begin{lemma}\label{lem:sum-of-digit}
Let $b$ be a prime, let $\alpha\ge 2$ be an integer. For $n\in \nat$ and a real number $\lambda >1/\alpha$, we have
\begin{align*}
    \sum_{k\in \Ecal}b^{-\lambda\mu_{\alpha}(\lfloor k/b\rfloor)}\le A_{\alpha,b,\lambda,1} \quad \mathrm{and}\quad \sum_{\substack{k\in \Ecal\\ b^n\mid k}}b^{-\lambda\mu_{\alpha}(\lfloor k/b\rfloor)}\le \frac{A_{\alpha,b,\lambda,2}}{b^{2\lambda n}},
  \end{align*}
where $A_{\alpha,b,\lambda,1}$ and $A_{\alpha,b,\lambda,2}$ are positive and depend only on $\alpha$, $b$ and $\lambda$, which are respectively given by
  \begin{align*}
    A_{\alpha,b,\lambda,1}=\frac{b}{b-1}\left[ \sum_{v=1}^{\alpha-1}\prod_{i=1}^{v}\left( \frac{b-1}{b^{\lambda i}-1}\right)+\frac{b^{\lambda \alpha}-1}{b^{\lambda \alpha}-b}\prod_{i=1}^{\alpha}\left( \frac{b-1}{b^{\lambda i}-1}\right)\right] ,
  \end{align*}
and
  \begin{align*}
    A_{\alpha,b,\lambda,2}=\frac{1}{b-1}\sum_{v=2}^{\alpha-1}\prod_{i=1}^{v}\left( \frac{b^{\lambda}(b-1)}{b^{\lambda i}-1}\right)+\frac{b^{\lambda}}{b^{\lambda\alpha}-b}\prod_{i=1}^{\alpha-1}\left( \frac{b^{\lambda}(b-1)}{b^{\lambda i}-1}\right) .
  \end{align*}
\end{lemma}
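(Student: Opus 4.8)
The plan is to reduce both sums, which run over the digit-sum class $\Ecal$, to sums over a free index $m=\lfloor k/b\rfloor$ and then to evaluate the resulting Dick-weight generating function. First I would exploit the structure of the map $k\mapsto\lfloor k/b\rfloor$. Writing $k=\kappa_0+\kappa_1 b+\cdots$, the quantity $\mu_{\alpha}(\lfloor k/b\rfloor)$ depends only on the digits $\kappa_1,\kappa_2,\dots$, whereas the defining congruence $\delta(k)=\kappa_0+(\kappa_1+\kappa_2+\cdots)\equiv0\pmod b$ of $\Ecal$ pins $\kappa_0$ uniquely once the higher digits are fixed. Hence $k\mapsto\lfloor k/b\rfloor$ is a bijection from $\Ecal$ onto $\nat$, and the first sum equals the clean sum $\sum_{m=1}^{\infty}b^{-\lambda\mu_{\alpha}(m)}$, with the congruence fully absorbed. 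For the second sum the divisibility $b^n\mid k$ forces $\kappa_0=0$, so $\kappa_0$ is no longer free to absorb the congruence; the reduction then gives $\sum_{m}b^{-\lambda\mu_{\alpha}(m)}$ over those $m$ with $b^{n-1}\mid m$ and $\delta(m)\equiv0\pmod b$. The essential gain here is that this residual congruence excludes every $m$ with a single nonzero digit (one digit $0<\kappa<b$ cannot satisfy $\delta\equiv0$), so each surviving $m$ has at least two nonzero digits.

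Next I would evaluate $\sum_{m\ge1}b^{-\lambda\mu_{\alpha}(m)}$ by grouping the $m$ according to the number $v$ of nonzero $b$-adic digits, at positions $a_1>\cdots>a_v\ge1$ with values in $\{1,\dots,b-1\}$. For $v\le\alpha$ one has $\mu_{\alpha}(m)=a_1+\cdots+a_v$; the gap substitution $d_i=a_i-a_{i+1}\ge1$ (with $d_v=a_v$) turns $\sum_i a_i$ into $\sum_j j\,d_j$ and factorises the position sum into $\prod_{i=1}^{v}\frac{b-1}{b^{\lambda i}-1}$. For $v>\alpha$ the weight sees only the top $\alpha$ positions while the digits below $a_{\alpha}$ are free; summing this free tail produces the factor $\sum_{d_{\alpha}\ge1}b^{(1-\lambda\alpha)d_{\alpha}}=\frac{b}{b^{\lambda\alpha}-b}$, which converges \emph{precisely because} $\lambda>1/\alpha$. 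Collecting the two regimes reproduces the bracket defining $A_{\alpha,b,\lambda,1}$, so the first bound follows at once, with the factor $\frac{b}{b-1}\ge1$ to spare.

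For the second estimate I would run the same computation under the two extra constraints. Divisibility by $b^{n-1}$ raises every nonzero position by $n-1$, contributing a factor $b^{-\lambda v(n-1)}$; since $v\ge2$, the two leading positions are each at least $n$, which is exactly what yields the prefactor $b^{-2\lambda n}$ after bounding the residual $b^{-\lambda n(v-2)}\le1$ (and $b^{-\lambda n(\alpha-2)}\le1$ in the tail). The digit-sum congruence is then handled by a counting step: among digit assignments compatible with a prescribed position set, at most a $1/b$-fraction satisfy $\delta\equiv0$, which trims one factor $b-1$ (giving the $\frac{1}{b-1}$ prefactor on the $v\le\alpha-1$ part) and bounds the tail count uniformly by $(b-1)^{\alpha-1}b^{a_{\alpha}-n}$. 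Feeding these into the same gap/geometric computation reproduces $A_{\alpha,b,\lambda,2}$ term by term.

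The main obstacle is the $v>\alpha$ tail: one must cleanly separate the free lower digits from the $\alpha$ weighted leading positions, isolate the single geometric series whose convergence forces $\lambda>1/\alpha$, and—in the second estimate—track how the digit-sum congruence interacts with those free lower digits so as to land on the stated constants rather than on a mere $O(b^{-2\lambda n})$ bound. These manipulations are elementary but bookkeeping-heavy; the delicate points are the uniform-in-$n$ extraction of the factor $b^{-2\lambda n}$ and the precise $1/b$ counting of the congruence.
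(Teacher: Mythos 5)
The paper itself contains no proof of this lemma --- it defers entirely to \cite[Lemma~25]{GSYxx} --- so your proposal cannot be matched against an in-paper argument; judged on its own, it is correct and recovers the stated constants exactly. Your opening reduction is sound: since $\delta(k)=\kappa_0+\delta(\lfloor k/b\rfloor)\pmod b$, the lowest digit is uniquely determined by the higher ones, so $k\mapsto\lfloor k/b\rfloor$ is indeed a bijection from $\Ecal$ onto $\nat$ (this is the same mechanism, via the map $\beta$, that the paper exploits inside the proof of Lemma~\ref{lem:folded_2}). Your exact evaluation of $\sum_{m\ge 1}b^{-\lambda\mu_{\alpha}(m)}$ by the gap substitution, with the tail factor $\sum_{d_{\alpha}\ge 1}b^{(1-\lambda\alpha)d_{\alpha}}=b/(b^{\lambda\alpha}-b)$ pinpointing where $\lambda>1/\alpha$ enters, agrees with the closed form quoted from \cite[Theorem~2]{BDLNP12} (whose $\lambda=1$ case appears verbatim in Theorem~\ref{theorem:chi_calculation}), and it shows the first bound even holds without the slack factor $b/(b-1)$ present in $A_{\alpha,b,\lambda,1}$. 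For the second sum, the reduction to $m$ with $b^{n-1}\mid m$ and $\delta(m)\equiv 0\pmod b$, the observation that such $m$ has $v\ge 2$ nonzero digits all at positions $\ge n$, and the extraction of $b^{-2\lambda n}$ via $b^{-\lambda vn}\le b^{-2\lambda n}$ (for $v\ge 2$) and $b^{-\lambda\alpha n}\le b^{-2\lambda n}$ (for $\alpha\ge 2$) reproduce both terms of $A_{\alpha,b,\lambda,2}$ term by term; I checked the bookkeeping and it lands on the stated constants.

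One local repair is needed in your counting step: the claim that ``at most a $1/b$-fraction'' of digit assignments on a prescribed position set satisfy $\delta\equiv 0\pmod b$ is false in general. The exact count is $\bigl((b-1)^v+(-1)^v(b-1)\bigr)/b$, and for $b=2$, $v=2$ the unique assignment $(1,1)$ satisfies the congruence, a fraction of $1$. The correct --- and for your computation sufficient --- statement is obtained by fixing all nonzero digits but one: at most one value of the remaining digit in $\{1,\dots,b-1\}$ can satisfy the congruence, so at most $(b-1)^{v-1}$ assignments survive in the range $2\le v\le\alpha-1$, and at most $(b-1)^{\alpha-1}b^{a_{\alpha}-n}$ in the tail. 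These are exactly the bounds your subsequent computation uses (they trim one factor $b-1$, producing the prefactor $1/(b-1)$), so the slip is in the justification only, not in the inequality chain, and the proof goes through once that sentence is replaced.
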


Here we note that $A_{\alpha,b,\lambda}$ in Theorem \ref{theorem:cbc} is given by $A_{\alpha,b,\lambda}=A_{\alpha,b,\lambda,1}+A_{\alpha,b,\lambda,2}$. In the following argument, we shall use the inequality
  \begin{align}\label{eq:jensen}
    \left( \sum_{n}a_n\right)^{\lambda} \le \sum_{n}a_n^{\lambda} ,
  \end{align}
for any sequence of non-negative real numbers $(a_n)_{n\in \nat}$ and any $0<\lambda\le 1$.

We now prove Theorem~\ref{theorem:cbc} by induction. Let us consider the case $\tau=1$ first. There exists at least one polynomial $q_1\in R_{b,n}$ for which $B_{\alpha}^{\lambda}(q_1,p)$ is smaller than or equal to the average of $B_{\alpha}^{\lambda}(\tilde{q}_1,p)$ over $\tilde{q}_1\in R_{b,n}$. Thus, we have
  \begin{align}\label{eq:cbc_proof1}
    B_{\alpha}^{\lambda}(q_1,p) & \le \frac{1}{b^n}\sum_{\tilde{q}_1\in R_{b,n}}B_{\alpha}^{\lambda}(\tilde{q}_1,p) \nonumber \\
    & \le \gamma_1^{\lambda/2}D_{\alpha,b}^{\lambda/2}\sum_{k_1\in \Ecal}b^{-\lambda \mu_{\alpha}(\lfloor k_1/b\rfloor)}\frac{1}{b^n}\sum_{\substack{\tilde{q}_1\in R_{b,n} \\ \tr_n(k_1)\cdot \tilde{q}_1 \equiv a \pmod p\\ \deg(a)<n-m}}1 ,
  \end{align}
for $0< \lambda \le 1$. The innermost sum equals the number of solutions $\tilde{q}_1\in R_{b,n}$ such that $\tr_{n}(k_1)\cdot \tilde{q}_1\equiv a \pmod p$ with $\deg(a)<n-m$. If $\tr_n(k_1)$ is a multiple of $p$, we have $\tr_{n}(k_1)\cdot \tilde{q}_1\equiv 0 \pmod p$ independently of $\tilde{q}_1$, so that we have
  \begin{align*}
    \frac{1}{b^n}\sum_{\substack{\tilde{q}_1\in R_{b,n} \\ \tr_n(k_1)\cdot \tilde{q}_1 \equiv a \pmod p\\ \deg(a)<n-m}}1 = 1 .
  \end{align*}
Otherwise if $\tr_n(k_1)$ is not a multiple of $p$, then there are $b^{n-m}$ possible choices for $a\in \ZZ_b[x]$ such that $\deg(a)<n-m$, for each of which there is one solution $\tilde{q}_1$ to $\tr_{n}(k_1)\cdot \tilde{q}_1\equiv a \pmod p$, so that we have
  \begin{align*}
    \frac{1}{b^n}\sum_{\substack{\tilde{q}_1\in R_{b,n} \\ \tr_n(k_1)\cdot \tilde{q}_1 \equiv a \pmod p\\ \deg(a)<n-m}}1 = \frac{1}{b^m} .
  \end{align*}
Substituting these results into (\ref{eq:cbc_proof1}) and using Lemma \ref{lem:sum-of-digit}, we obtain
  \begin{align*}
    B_{\alpha}^{\lambda}(q_1,p) & \le \gamma_1^{\lambda/2}D_{\alpha,b}^{\lambda/2}\left( \sum_{\substack{k_1\in \Ecal\\ b^n\mid k_1}}b^{-\lambda \mu_{\alpha}(\lfloor k_1/b\rfloor)} + \frac{1}{b^m}\sum_{\substack{k_1\in \Ecal\\ b^n\nmid k_1}}b^{-\lambda \mu_{\alpha}(\lfloor k_1/b\rfloor)}\right) \\
    & \le \gamma_1^{\lambda/2}D_{\alpha,b}^{\lambda/2}\left( \frac{A_{\alpha,b,\lambda,2}}{b^{2\lambda n}} + \frac{A_{\alpha,b,\lambda,1}}{b^m}\right) \\
    & \le \frac{\gamma_1^{\lambda/2}D_{\alpha,b}^{\lambda/2}}{b^{\min(m,2\lambda n)}}(A_{\alpha,b,\lambda,1}+A_{\alpha,b,\lambda,2}) = \frac{\gamma_1^{\lambda/2}D_{\alpha,b}^{\lambda/2}A_{\alpha,b,\lambda}}{b^{\min(m,2\lambda n)}},
  \end{align*}
for $1/\alpha<\lambda \le 1$. Hence the result for the case $\tau=1$ follows.

Next we suppose that for $1\le \tau<s$, the inequality
  \begin{align}\label{eq:cbc_proof2}
    B_{\alpha}(\bsq_{\tau},p) \le \frac{1}{b^{\min( m/\lambda, 2n)}}\left[-1+\prod_{j=1}^{\tau}\left(1+\gamma_j^{\lambda/2}D_{\alpha,b}^{\lambda/2}A_{\alpha,b,\lambda}\right)\right]^{1/\lambda}
  \end{align}
holds true for any $1/\alpha<\lambda \le 1$. Then we have
  \begin{align}\label{eq:cbc_proof3}
    & \quad B_{\alpha}((\bsq_{\tau},\tilde{q}_{\tau+1}),p) \nonumber \\
    & = \sum_{\emptyset \ne u\subseteq \{1,\ldots,\tau+1\}}\gamma_u^{1/2}D_{\alpha,b}^{|u|/2}\sum_{\substack{\bsk_u\in \Ecal^{|u|}\\ (\bsk_u,\bszero) \in \Pcal^{\perp}((\bsq_{\tau},\tilde{q}_{\tau+1}),p)}}b^{-\mu_{\alpha}(\lfloor\bsk_u/b\rfloor)} \nonumber \\
    & = \sum_{\emptyset \ne u\subseteq \{1,\ldots,\tau\}}\gamma_u^{1/2}D_{\alpha,b}^{|u|/2}\sum_{\substack{\bsk_u\in \Ecal^{|u|}\\ (\bsk_u,\bszero) \in \Pcal^{\perp}((\bsq_{\tau},\tilde{q}_{\tau+1}),p)}}b^{-\mu_{\alpha}(\lfloor\bsk_u/b\rfloor)} \nonumber \\
    & \quad + \sum_{u\subseteq \{1,\ldots,\tau\}}\gamma_{u\cup \{\tau+1\}}^{1/2}D_{\alpha,b}^{(|u|+1)/2}\sum_{\substack{\bsk_{u\cup \{\tau+1\}}\in \Ecal^{|u|+1}\\ (\bsk_{u\cup \{\tau+1\}},\bszero) \in\Pcal^{\perp}((\bsq_{\tau},\tilde{q}_{\tau+1}),p)}}b^{-\mu_{\alpha}(\lfloor\bsk_{u\cup \{\tau+1\}}/b\rfloor)} \nonumber \\
    & =: B_{\alpha}(\bsq_{\tau},p)+\theta(\bsq_{\tau},\tilde{q}_{\tau+1},p) ,
  \end{align}
where we denote by $\theta(\bsq_{\tau},\tilde{q}_{\tau+1},p)$ the second term in the last equality. In Algorithm \ref{algorithm:cbc}, we choose $q_{\tau+1}\in R_{b,n}$ which minimizes $\theta(\bsq_{\tau},\tilde{q}_{\tau+1},p)$ as a function of $\tilde{q}_{\tau+1}$, since the dependence of $B_{\alpha}((\bsq_{\tau},\tilde{q}_{\tau+1}),p)$ on $\tilde{q}_{\tau+1}$ appears only in $\theta(\bsq_{\tau},\tilde{q}_{\tau+1},p)$. Using an averaging argument and the inequality (\ref{eq:jensen}), we have
  \begin{align*}
    \theta^{\lambda}(\bsq_{\tau},q_{\tau+1},p) & \le \frac{1}{b^n}\sum_{\tilde{q}_{\tau+1}\in R_{b,n}}\theta^{\lambda}(\bsq_{\tau},\tilde{q}_{\tau+1},p) \\
    & \le \sum_{u\subseteq \{1,\ldots,\tau\}}\gamma_{u\cup \{\tau+1\}}^{\lambda/2}D_{\alpha,b}^{\lambda(|u|+1)/2}\sum_{\bsk_{u\cup \{\tau+1\}}\in \Ecal^{|u|+1}}b^{-\lambda \mu_{\alpha}(\lfloor\bsk_{u\cup \{\tau+1\}}/b\rfloor)} \\
    & \quad \times \frac{1}{b^n}\sum_{\substack{\tilde{q}_{\tau+1}\in R_{b,n}\\ \tr_n(\bsk_u)\cdot \bsq_u + \tr_n(k_{\tau+1})\cdot \tilde{q}_{\tau+1}\equiv a \pmod p\\ \deg(a)<n-m}}1 ,
  \end{align*}
for $0<\lambda \le 1$. If $\tr_n(k_{\tau+1})$ is a multiple of $p$, then we have
  \begin{align*}
    \tr_n(\bsk_u)\cdot \bsq_u + \tr_n(k_{\tau+1})\cdot \tilde{q}_{\tau+1} \equiv \tr_n(\bsk_u)\cdot \bsq_u \pmod p ,
  \end{align*}
so that the innermost sum equals $b^{n}$ for $(\bsk_u,\bszero) \in \Pcal^{\perp}(\bsq_{\tau},p)$, and equals 0 otherwise. If $\tr_{n}(k_{\tau+1})$ is not a multiple of $p$, there are $b^{n-m}$ possible choices for $a$ such that $\deg(a)<n-m$, for each of which there exists at most one solution $\tilde{q}_{\tau+1}$ to $\tr_n(k_{\tau+1})\cdot \tilde{q}_{\tau+1}\equiv a-\tr_n(\bsk_u)\cdot \bsq_u \pmod p$, so that the innermost sum is bounded above by $b^{n-m}$. From these results and using Lemma \ref{lem:sum-of-digit}, we have
  \begin{align}\label{eq:cbc_proof4}
    & \quad \theta^{\lambda}(\bsq_{\tau},q_{\tau+1},p) \nonumber \\
    & \le \sum_{u\subseteq \{1,\ldots,\tau\}}\gamma_{u\cup \{\tau+1\}}^{\lambda/2}D_{\alpha,b}^{\lambda(|u|+1)/2}\sum_{\substack{\bsk_{u\cup \{\tau+1\}}\in \Ecal^{|u|+1}\\ b^n\mid k_{\tau+1}}}b^{-\lambda \mu_{\alpha}(\lfloor\bsk_{u\cup \{\tau+1\}}/b\rfloor)} \nonumber \\
    & \quad + \frac{1}{b^m}\sum_{u\subseteq \{1,\ldots,\tau\}}\gamma_{u\cup \{\tau+1\}}^{\lambda/2}D_{\alpha,b}^{\lambda(|u|+1)/2}\sum_{\substack{\bsk_{u\cup \{\tau+1\}}\in \Ecal^{|u|+1}\\ b^n\nmid k_{\tau+1}}}b^{-\lambda \mu_{\alpha}(\lfloor\bsk_{u\cup \{\tau+1\}}/b\rfloor)} \nonumber \\
    & \le \sum_{u\subseteq \{1,\ldots,\tau\}}\gamma_{u\cup \{\tau+1\}}^{\lambda/2}D_{\alpha,b}^{\lambda(|u|+1)/2}\sum_{\bsk_u\in \Ecal^{|u|}}b^{-\lambda \mu_{\alpha}(\lfloor\bsk_u/b\rfloor)} \nonumber \\
    & \quad \times \left( \sum_{\substack{k_{\tau+1}\in \Ecal\\ b^n\mid k_{\tau+1}}}b^{-\lambda \mu_{\alpha}(\lfloor k_{\tau+1}/b\rfloor )} + \frac{1}{b^m}\sum_{k_{\tau+1}\in \Ecal}b^{-\lambda \mu_{\alpha}(\lfloor k_{\tau+1}/b\rfloor )}\right) \nonumber \\
    & \le \sum_{u\subseteq \{1,\ldots,\tau\}}\gamma_{u\cup \{\tau+1\}}^{\lambda/2}D_{\alpha,b}^{\lambda(|u|+1)/2}A_{\alpha,b,\lambda,1}^{|u|}\left( \frac{A_{\alpha,b,\lambda,2}}{b^{2\lambda n}} + \frac{A_{\alpha,b,\lambda,1}}{b^m}\right) \nonumber \\
    & \le \frac{\gamma_{\tau+1}^{\lambda/2}D_{\alpha,b}^{\lambda/2}A_{\alpha,b,\lambda}}{b^{\min(m,2\lambda n)}}\sum_{u\subseteq \{1,\ldots,\tau\}}\gamma_u^{\lambda/2}D_{\alpha,b}^{\lambda |u|/2}A_{\alpha,b,\lambda}^{|u|} \nonumber \\
    & = \frac{\gamma_{\tau+1}^{\lambda/2}D_{\alpha,b}^{\lambda/2}A_{\alpha,b,\lambda}}{b^{\min(m,2\lambda n)}}\prod_{j=1}^{\tau}\left(1+\gamma_j^{\lambda/2}D_{\alpha,b}^{\lambda/2}A_{\alpha,b,\lambda}\right) ,
  \end{align}
for $1/\alpha <\lambda \le 1$. Applying the inequality (\ref{eq:jensen}) to (\ref{eq:cbc_proof3}) in which $q_{\tau+1}$ equals that $\tilde{q}_{\tau+1}$ which minimizes $\theta^{\lambda}(\bsq_{\tau},\tilde{q}_{\tau+1},p)$, and then using (\ref{eq:cbc_proof2}) and (\ref{eq:cbc_proof4}), we obtain
  \begin{align*}
    B_{\alpha}^{\lambda}(\bsq_{\tau+1},p) & \le B_{\alpha}^{\lambda}(\bsq_{\tau},p)+\theta^{\lambda}(\bsq_{\tau},q_{\tau+1},p) \\
    & \le \frac{1}{b^{\min( m, 2\lambda n)}}\left[-1+\prod_{j=1}^{\tau}\left(1+\gamma_j^{\lambda/2}D_{\alpha,b}^{\lambda/2}A_{\alpha,b,\lambda}\right)\right] \\
    & \quad + \frac{\gamma_{\tau+1}^{\lambda/2}D_{\alpha,b}^{\lambda/2}A_{\alpha,b,\lambda}}{b^{\min(m,2\lambda n)}}\prod_{j=1}^{\tau}\left(1+\gamma_j^{\lambda/2}D_{\alpha,b}^{\lambda/2}A_{\alpha,b,\lambda}\right) \\
    & = \frac{1}{b^{\min( m, 2\lambda n)}}\left[-1+\prod_{j=1}^{\tau+1}\left(1+\gamma_j^{\lambda/2}D_{\alpha,b}^{\lambda/2}A_{\alpha,b,\lambda}\right)\right] ,
  \end{align*}
for $1/\alpha<\lambda \le 1$, which completes the proof.

\section{Proof of Theorem~\ref{theorem:chi_calculation}}\label{appendix:b}
Since $z\in \Zbinfty$ is given in the form $(\zeta_1,\zeta_2,\ldots,\zeta_n,0,0,\ldots)^{\top}$ for $n \in \nat$ and $\zeta_i\in \ZZ_b$, $1\le i\le n$, $\Phi_b(z)$ is given as
  \begin{align*}
     \Phi_b(z)=(\eta_1,\eta_2,\ldots)^{\top}\in \Zbinfty\quad \mathrm{with}\quad \eta_i = \zeta_{i+1}-\zeta_{1} \pmod b ,
  \end{align*}
where $\zeta_{n+1}=\zeta_{n+2}=\cdots=0$. 

Let us consider the first part. If $\zeta_1=\cdots=\zeta_n=0$, it holds that $\Phi_b(z)=(0,0,\ldots)^{\top}$, and thus, $W_k(\Phi_b(z))=1$ for all $k\in \nat$. Therefore, we have
  \begin{align*}
     \chi_b\circ \Phi_b(z) = \sum_{k=1}^{\infty}b^{-\mu_{\alpha}(k)},
  \end{align*}
where the result of the last sum is given in \cite[Theorem~2]{BDLNP12}, which proves the first part.

Let us consider the second part next. We denote the $b$-adic expansion of $k\in \nat$ by $k=\kappa_1b^{a_1-1}+\cdots+\kappa_v b^{a_v-1}$ for some $v\in \nat$ such that $a_1>\cdots >a_v>0$ and $0<\kappa_1,\ldots,\kappa_v<b$. Then we have $\mu_{\alpha}(k)=a_1+\dots+a_{\min(v,\alpha)}$ and 
  \begin{align*}
     W_k(\Phi_b(z))=\prod_{i=1}^{v}\omega_b^{\kappa_i(\zeta_{a_i+1}-\zeta_{1})} .
  \end{align*}
Here we note that $\mu_{\alpha}(k)$ does not depend on the values of $\kappa_1,\ldots,\kappa_v$. Using this result and arranging every element of $\nat$ according to the value of $v$ in their expansions, we obtain 
  \begin{align}\label{eq:eff_calcul_1}
    & \quad \chi_b\circ \Phi_b(z) \nonumber \\
    & = \sum_{v=1}^{\infty}\sum_{0<a_v<\cdots <a_1}b^{-\sum_{i=1}^{\min(v,\alpha)}a_i}\sum_{0<\kappa_1,\ldots,\kappa_v<b}\prod_{i=1}^{v}\omega_b^{\kappa_i(\zeta_{a_i+1}-\zeta_{1})} \nonumber \\
    & = \sum_{v=1}^{\alpha-1}\sum_{0<a_v<\cdots <a_1}\prod_{i=1}^{v}\left(b^{-a_i}\sum_{\kappa_i=1}^{b-1}\omega_b^{\kappa_i(\zeta_{a_i+1}-\zeta_{1})}\right) \nonumber \\
    & \quad + \sum_{v=\alpha}^{\infty}\sum_{0<a_v<\cdots <a_1}\prod_{i=1}^{\alpha}\left(b^{-a_i}\sum_{\kappa_i=1}^{b-1}\omega_b^{\kappa_i(\zeta_{a_i+1}-\zeta_{1})}\right) \prod_{j=\alpha+1}^{v}\left(\sum_{\kappa_j=1}^{b-1}\omega_b^{\kappa_j(\zeta_{a_j+1}-\zeta_{1})}\right) ,
  \end{align}
wherein we have
  \begin{align*}
    \sum_{\kappa_i=1}^{b-1}\omega_b^{\kappa_i(\zeta_{a_i+1}-\zeta_{1})} & = \begin{cases}
     b-1 & (\zeta_{a_i+1}-\zeta_{1}=0) \\
     -1  & (\zeta_{a_i+1}-\zeta_{1}\ne 0)
    \end{cases} \\
    & = L(z,a_i+1) .
  \end{align*}
Thus, the second term on the right-hand side of (\ref{eq:eff_calcul_1}) becomes
  \begin{align}\label{eq:eff_calcul_2}
    & \quad \sum_{v=\alpha}^{\infty}\sum_{0<a_v<\cdots <a_1}\prod_{i=1}^{\alpha}b^{-a_i}L(z,a_i+1) \prod_{j=\alpha+1}^{v}L(z,a_j+1) \nonumber \\
    & = \sum_{0<a_\alpha <\cdots <a_1}\prod_{i=1}^{\alpha}b^{-a_i}L(z,a_i+1) \nonumber \\
    & \quad \times  \sum_{v=\alpha}^{\infty}\sum_{0< a_v< \cdots <a_{\alpha+1}<a_{\alpha}}\prod_{i'\in \{a_{v},\ldots,a_{\alpha+1}\}}L(z,i'+1)\prod_{i''\in \{1,\ldots,a_{\alpha}-1\}\setminus \{a_{v},\ldots,a_{\alpha+1}\}}1  \nonumber \\
    & = \sum_{0<a_\alpha <\cdots <a_1}\prod_{i=1}^{\alpha}b^{-a_i}L(z,a_i+1)\sum_{u\subseteq \{1,\ldots,a_{\alpha}-1\}}\prod_{i'\in u}L(z,i'+1)\prod_{i''\in \{1,\ldots,a_{\alpha}-1\}\setminus u}1 \nonumber \\
    & = \sum_{0<a_\alpha <\cdots <a_1}\prod_{i=1}^{\alpha}b^{-a_i}L(z,a_i+1)\prod_{j=1}^{a_{\alpha}-1}\left[ 1+L(z,j+1)\right] .
  \end{align}
The innermost product equals $b^{a_{\alpha}-1}$ if and only if $L(z,j+1)=b-1$ for all $1\le j< a_{\alpha}$ and equals 0 otherwise. Since $L(z,j+1)=b-1$ only when $\zeta_{j+1}=\zeta_{1}$, we focus on the condition $\zeta_{j+1}=\zeta_{1}$ for all $1\le j< a_{\alpha}$. It is obvious that $z\in \Zbinfty$ satisfying this condition can be expressed in the form
  \begin{align*}
    (\underbrace{\zeta_1, \zeta_1,\ldots ,\zeta_1}_{a_{\alpha}}, \zeta_{a_{\alpha}+1},\zeta_{a_{\alpha}+2},\ldots)^{\top} .
  \end{align*}
For such $z$ we have 
  \begin{align*}
    \Phi_b(z) = (\underbrace{0, 0,\ldots ,0}_{a_{\alpha}-1}, \eta_{a_{\alpha}+1},\eta_{a_{\alpha}+2},\ldots)^{\top} .
  \end{align*}
Therefore, the innermost product on the right-most side of (\ref{eq:eff_calcul_2}) equals $b^{a_{\alpha}-1}$ if $\Phi_b(z)\in H_{a_{\alpha}-1}$, and equals 0 otherwise. Hence the second term on the right-hand side of (\ref{eq:eff_calcul_1}) can be further rewritten as
  \begin{align*}
    \sum_{0<a_\alpha <\cdots <a_1}b^{a_{\alpha}-1}[\Phi_b(z)\in H_{a_{\alpha}-1}]\prod_{i=1}^{\alpha}b^{-a_i}L(z,a_i+1)  .
  \end{align*}
Substituting this result into (\ref{eq:eff_calcul_1}) we have
  \begin{align}\label{eq:eff_calcul_3}
    \chi_b\circ \Phi_b(z) & = \sum_{v=1}^{\alpha-1}\sum_{0<a_v<\cdots <a_1}\prod_{i=1}^{v}b^{-a_i}L(z,a_i+1) \nonumber \\
    & \quad + \sum_{0<a_\alpha <\cdots <a_1}b^{a_{\alpha}-1}[\Phi_b(z)\in H_{a_{\alpha}-1}]\prod_{i=1}^{\alpha}b^{-a_i}L(z,a_i+1) .
  \end{align}

For the first term on the right-hand side of (\ref{eq:eff_calcul_3}) we have
  \begin{align*}
    & \quad \sum_{0<a_v<\cdots <a_1}\prod_{i=1}^{v}b^{-a_i}L(z,a_i+1) \\
    & = \sum_{0<a_v<\cdots <a_1<n}\prod_{i=1}^{v}b^{-a_i}L(z,a_i+1)+ \sum_{0<a_v<\cdots <a_2<n\le a_1}\prod_{i=1}^{v}b^{-a_i}L(z,a_i+1) \\
    & \quad + \cdots + \sum_{0<a_v<n\le a_{v-1}< \cdots <a_1}\prod_{i=1}^{v}b^{-a_i}L(z,a_i+1)+ \sum_{n\le a_v<\cdots <a_1}\prod_{i=1}^{v}b^{-a_i}L(z,a_i+1) \\
    & = \sum_{t=0}^{v}\sum_{0<a_v<\cdots <a_{t+1}<n}\prod_{i=t+1}^{v}b^{-a_i}L(z,a_i+1)\sum_{n\le a_t<\cdots <a_1}\prod_{j=1}^{t}b^{-a_j}L(z,a_j+1) , \\
    & = \sum_{t=0}^{v}V_{v-t}(z)\sum_{n\le a_t<\cdots <a_1}\prod_{j=1}^{t}b^{-a_j}L(z,a_j+1) ,
  \end{align*}
where we define $V_{0}(z)=1$ for any $z\in G$. We now recall that $z\in \Zbinfty$ is given in the form $(\zeta_1,\zeta_2,\ldots,\zeta_n,0,0,\ldots)^{\top}$ for $n \in \nat$ and $\zeta_i\in \ZZ_b$, $1\le i\le n$. From this assumption, we have $\zeta_{n+1}=\zeta_{n+2}=\cdots =0$, so that for any $i\ge n$
  \begin{align*}
    L(z,i+1) & = \begin{cases}
     b-1 & (\zeta_{1}=0)  \\
     -1 & (\zeta_{1}\ne 0)
    \end{cases} \\
    & = \rho(\zeta_{1}) .
  \end{align*}
Therefore in the last expression we have
  \begin{align}\label{eq:U_v}
    \sum_{n\le a_t<\cdots <a_1}\prod_{j=1}^{t}b^{-a_j}L(z,a_j+1) & = \rho^t(\zeta_{1})\sum_{n\le a_t<\cdots <a_1}\prod_{j=1}^{t}b^{-a_j} \nonumber \\
    & = \rho^t(\zeta_{1})\sum_{a_t=n}^{\infty}b^{-a_{t}}\sum_{a_{t-1}=a_t+1}^{\infty}b^{-a_{t-1}}\cdots \sum_{a_1=a_2+1}^{\infty}b^{-a_{1}} \nonumber \\
    & = \frac{1}{b^{t(n-1)}}\prod_{i=1}^{t}\frac{\rho(\zeta_{1})}{b^i-1} = U_t(\zeta_1) .
  \end{align}
Using these results and swapping the order of sums, the first term on the right-hand side of (\ref{eq:eff_calcul_3}) becomes
  \begin{align*}
    \sum_{v=1}^{\alpha-1}\sum_{0<a_v<\cdots <a_1}\prod_{i=1}^{v}b^{-a_i}L(z,a_i+1) & = \sum_{v=1}^{\alpha-1}\sum_{t=0}^{v}V_{v-t}(z)U_t(\zeta_1) \\
    & = \sum_{t=1}^{\alpha-1}\left(\sum_{v=t}^{\alpha-1}U_{v-t}(\zeta_1)\right)V_t(z)+\sum_{v=1}^{\alpha-1}U_{v}(\zeta_1) \\
    & = \sum_{t=1}^{\alpha-1}\tilde{U}_t(\zeta_1)V_t(z)+(\tilde{U}_0(\zeta_1)-1) .
  \end{align*}
  
For the second term on the right-hand side of (\ref{eq:eff_calcul_3}), we have
  \begin{align*}
    & \quad \sum_{0<a_\alpha <\cdots <a_1}b^{a_{\alpha}-1}[\Phi_b(z)\in H_{a_{\alpha}-1}]\prod_{i=1}^{\alpha}b^{-a_i}L(z,a_i+1) \\
    & = \sum_{0<a_\alpha <\cdots <a_1<n}b^{a_{\alpha}-1}[\Phi_b(z)\in H_{a_{\alpha}-1}]\prod_{i=1}^{\alpha}b^{-a_i}L(z,a_i+1) \\
    & \quad + \sum_{0<a_\alpha <\cdots <a_2<n\le a_1}b^{a_{\alpha}-1}[\Phi_b(z)\in H_{a_{\alpha}-1}]\prod_{i=1}^{\alpha}b^{-a_i}L(z,a_i+1) \\
    & \quad +\cdots + \sum_{n\le a_\alpha <\cdots <a_1}b^{a_{\alpha}-1}[\Phi_b(z)\in H_{a_{\alpha}-1}]\prod_{i=1}^{\alpha}b^{-a_i}L(z,a_i+1) \\
    & = \sum_{t=0}^{\alpha}\sum_{0<a_\alpha<\cdots <a_{t+1}<n}b^{a_{\alpha}-1}[\Phi_b(z)\in H_{a_{\alpha}-1}]\prod_{i=t+1}^{\alpha}b^{-a_i}L(z,a_i+1) \\
    & \quad \times \sum_{n\le a_t<\cdots <a_1}\prod_{j=1}^{t}b^{-a_i}L(z,a_i+1) .
  \end{align*}
Here again we recall that $z\in \Zbinfty$ is given in the form $(\zeta_1,\zeta_2,\ldots,\zeta_n,0,0,\ldots)^{\top}$ for $n \in \nat$ and $\zeta_i\in \ZZ_b$, $1\le i\le n$. Furthermore, it does not hold that $\zeta_1=\cdots=\zeta_n=0$ for the second part of this theorem. Therefore, $\Phi_b(z)\notin H_{a_{\alpha}-1}$ whenever $a_{\alpha}\ge n$. Thus, we have
  \begin{align*}
    \sum_{n\le a_\alpha <\cdots <a_1}b^{a_{\alpha}-1}[\Phi_b(z)\in H_{a_{\alpha}-1}]\prod_{i=1}^{\alpha}b^{-a_i}L(z,a_i+1) = 0.
  \end{align*}
Thus, by using the above result and (\ref{eq:U_v}), the second term on the right-hand side of (\ref{eq:eff_calcul_3}) becomes
  \begin{align*}
    & \sum_{0<a_\alpha <\cdots <a_1}b^{a_{\alpha}-1}[\Phi_b(z)\in H_{a_{\alpha}-1}]\prod_{i=1}^{\alpha}b^{-a_i}L(z,a_i+1) \\
    = & \sum_{t=0}^{\alpha-1}\tilde{V}_{\alpha-t}(z)U_t(\zeta_1) = \sum_{t=1}^{\alpha}U_{\alpha-t}(\zeta_1)\tilde{V}_{t}(z) .
  \end{align*}
Therefore, we have
  \begin{align*}
    \chi_b\circ \Phi_b(z) = \sum_{t=1}^{\alpha-1}\tilde{U}_t(\zeta_1)V_t(z)+(\tilde{U}_0(\zeta_1)-1) + \sum_{t=1}^{\alpha}U_{\alpha-t}(\zeta_1)\tilde{V}_{t}(z) ,
  \end{align*}
which completes the proof of the second part.


\end{document}